\renewcommand{\d}{\displaystyle}
\theoremstyle{plain}
\theoremstyle{plain}
\newtheorem{theorem}{Theorem}
\newtheorem{lemma}{Lemma}
\theoremstyle{remark}
\newtheorem{remark}{Remark}
\numberwithin{theorem}{section}
\numberwithin{lemma}{section}
\numberwithin{corollary}{section}
\numberwithin{remark}{section}
\numberwithin{equation}{section}
\renewcommand{\d}{\displaystyle}
\def\l({\left(}
\def\r){\right)}
\begin{document}

\title{\bf Sums of averages of gcd-sum functions II }
\author{Lisa Kaltenb\"ock, Isao Kiuchi, Sumaia Saad Eddin, Masaaki Ueda}

\maketitle
{\def\thefootnote{}
\footnote{{\it Mathematics Subject Classification 2010: 11A25, 11N37.\\ 
Keywords: GCD-sum functions, Euler totient function, The Dedekind function, Dirichlet divisor problem, The Riemann Hypothesis, simple zeros of the Riemann zeta-function.}}

\begin{abstract}
 Let
$
\gcd(k,j)
$
denote  the greatest common divisor of the integers $k$ and $j$,
and let $r$ be any fixed positive integer. 
Define
$$
M_r(x; f) := \sum_{k\leq x}\frac{1}{k^{r+1}}\sum_{j=1}^{k}j^{r}f(\gcd(j,k))
$$
for  any  large real number $x\geq  5$, where  $f$ is  any arithmetical function.
 Let  $\phi$, and $\psi$ denote the Euler totient and the Dedekind   function, respectively.   
In this paper, we refine asymptotic  expansions of $M_r(x; {\rm id})$, $M_r(x;{\phi})$ and $M_r(x;{\psi})$. 
Furthermore, under the Riemann Hypothesis  and  the simplicity of zeros of the Riemann zeta-function, we establish  the asymptotic formula  of  
$M_r(x;{\rm id})$ for any large positive number $x>5$ satisfying $x=[x]+\frac12$. 
\end{abstract}

\maketitle

 \section{Introduction and Statement of Results}

Let $\gcd(k,j)$ be the greatest common divisor  of the integers $k$ and $j$. The gcd-sum function, which is also known as Pillai's arithmetical function, is defined by 
$$ P(n)=\sum_{k=1}^{n}\gcd (k,n).
$$
This function has been studied by many authors such as  Broughan \cite{Br},  Bordell\'{e}s \cite{B},Tanigawa and  Zhai \cite{TZ},  T\'{o}th \cite{To4}, and others. 
Analytic properties  for partial sums of the gcd-sum function $f(\gcd(j,k))$ were recently studied  by 
Inoue and Kiuchi \cite{IK, IK1}. We recall that the symbol $*$ denotes the Dirichlet convolution of two arithmetical functions $f$ and $g$ defined by $f * g(n) = \sum_{d \mid n} f(d) g(n / d)$, for every positive integer $n$.
For any arithmetical function $f$,  the second author~\cite{K1} showed, that for any fixed positive integer $r$ and any large positive number $x \geq 2$ we have 
\begin{align}                                                              \label{K-formula}
  M_r(x;f) & := \sum_{k\leq x}  \frac{1}{k^{r+1}}\sum_{j=1}^{k}j^{r}f(\gcd(k,j)) \nonumber \\
& =\frac12 \sum_{n\leq x}\frac{f(n)}{n}
  + \frac{1}{r+1}\sum_{d\ell\leq x}\frac{\mu*f(d)}{d}    
 + \frac{1}{r+1} \sum_{m=1}^{[r/2]}{r+1 \choose 2m} B_{2m} \sum_{d\ell\leq x}\frac{\mu*f(d)}{d} \frac{1}{\ell^{2m}}.    
\end{align}
Here, as usual, the function $\mu$ denotes
the M\"{o}bius function and $B_m = B_m(0)$ are the Bernoulli numbers, with $B_{m}(x)$ being the Bernoulli polynomials defined by the generating function
$$
\d \frac{ze^{xz}}{e^{z}-1}=\sum_{m=0}^{\infty}B_{m}(x)\frac{z^m}{m!}
$$
with $|z|<  2\pi$. Many applications of Eq.~\eqref{K-formula} have been given in~\cite{K}, \cite{K2} and \cite{KS}.\\

In \cite{K1}, Eq.~\eqref{K-formula} was used to establish asymptotic formulas for $M_r(x;f)$ for specific choices of $f$ such as the identity function ${\rm id}$, the Euler totient function $\phi={\rm id}*\mu$ or the Dedekind function $\psi ={\rm id}*|\mu|$. More precisely, let $\zeta(s)$ denote the Riemann zeta-function, then for $f = {\rm id}$ it was proved that
\begin{multline*}                                                           \label{gcd-w}
M_{r}(x;{\rm id})   
= \frac{1}{(r+1)\zeta(2)} x\log x + \frac{x}{2}   \\
+ \frac{1}{(r+1)\zeta(2)}\left(2\gamma -1 - \frac{\zeta'(2)}{\zeta(2)} + \sum_{m=1}^{[r/2]}\binom{r+1}{2m} B_{2m} \zeta(2m+1) \right)x  
+ K_{r}(x), 
\end{multline*}
where   
\begin{equation}
\label{K-w}
K_{r}(x) 
= \frac{1}{r+1} \sum_{n\leq x}\frac{\mu(n)}{n}\Delta\left(\frac{x}{n}\right) + O_{r}\left(\log x\right).
\end{equation}
For $f = \phi$ it was shown that
\begin{multline*}                                                        
\label{K21}
M_{r}(x;\phi)    
=\frac{1}{(r+1)\zeta^{2}(2)}x \log x + \frac{x}{2\zeta(2)}    \\
+ \frac{1}{(r+1)\zeta^{2}(2)}\left(2\gamma -1 -2\frac{\zeta'(2)}{\zeta(2)} + \sum_{m=1}^{[r/2]}\binom{r+1}{2m} B_{2m} \zeta(2m+1)\right) x + L_{r}(x),  
\end{multline*}
where 
\begin{equation}                                                           \label{Lr}  
L_{r}(x) 
:= \frac{1}{r+1}\sum_{n\leq x}\frac{\mu*\mu(n)}{n}\Delta\left(\frac{x}{n}\right) + O_{r}\left((\log x)^2\right). 
\end{equation}
Lastly, for $f = \psi$ it was proved that
\begin{multline*}                                                           \label{K31}
M_{r}(x;\psi)  =  \frac{1}{(r+1)\zeta^{}(4)} x\log x  + \frac{\zeta(2)}{2\zeta(4)} x     \\
+ 
\frac{1}{(r+1)\zeta(4)} 
\left(2\gamma - 1 - 2 \frac{\zeta'(4)}{\zeta(4)}+ \sum_{m=1}^{[r/2]}\binom{r+1}{2m}B_{2m} \zeta(2m+1) \right)x  + U_{r}(x),  
\end{multline*}
where  
\begin{equation}                                                           \label{Mr}
U_{r}(x) :=  \frac{1}{r+1} \sum_{n\leq x} \frac{\mu*|\mu|(n)}{n}\Delta\left(\frac{x}{n}\right)+  O_{r}\left((\log x)^{2}\right).
\end{equation}
The function $\Delta(x)$ denotes the error term of the Dirichlet divisor problem: Let $\tau = {\bf 1}*{\bf 1}$ be the divisor function, then for any large positive number $x \geq 2$,
\begin{align}                                                                                                        \label{PP}
\sum_{n\leq x}\tau(n)  &= x\log x + (2\gamma -1)x + \Delta(x),
\end{align}
where $\gamma$ is the Euler constant and $\Delta(x)$ can be estimated by $\Delta(x) = O\l(x^{\theta + \varepsilon}\r)$. It is known that  one can take  $1/4\leq \theta \leq 1/3$.

The first purpose of this paper is to refine the error terms $K_r(x), L_r(x)$ and $U_r(x)$ from the above formulas. Therefore, let $\sigma_{u} = {\rm id}_{u}*{\bf 1}$ be the generalized divisor function for any real number $u$ and let $m\geq 1$ be an integer. Then for any large positive number $x\geq 2$, 
the function $\Delta_{-2m}(x)$ denotes the error term of the generalized divisor problem given by
\begin{align}    
\label{QQ}
\sum_{n\leq x}\sigma_{-2m}(n) &= \zeta(1+2m)x - \frac{1}{2}\zeta(2m) + \Delta_{-2m}\l(x\r). 
\end{align}
We have the following results:


\begin{theorem}                                    
\label{th11}
Let $\Delta(x)$ and $\Delta_{-2m}(x)$ be the error terms  given by Eqs.~\eqref{PP} and \eqref{QQ}, respectively.
For any large  positive  number $x > 5$ and fixed positive  integer $r$, we have
\begin{align*}      															
K_r(x)
&= \frac{1}{r+1} \sum_{d\leq x} \frac{\mu(d)}{d}\Delta\l(\frac{x}{d}\r) 
 + \frac{1}{r+1}\sum_{d\leq x} \frac{\mu(d)}{d} \sum_{m=1}^{[r/2]}{r+1 \choose 2m} B_{2m} \Delta_{-2m}\l(\frac{x}{d}\r)   
 + O_{r}\l(\delta(x)\log x\r),                                       
\end{align*}
where the function $\delta(x)$ is defined by
\begin{align}                                            \label{delta}
\delta(x) := {\rm{exp}}\l(-C\frac{(\log x)^{3/5}}{(\log \log x)^{1/5}}\r)
\end{align}
with $C$ being a positive constant.

Moreover, we have
	\begin{align*}                                                              
	L_r(x) &=  \frac{1}{r+1} \sum_{n\leq x} \frac{\mu*\mu(n)}{n}\Delta\l(\frac{x}{n}\r)             \\
	& \quad  + \frac{1}{r+1} 
	\sum_{n\leq x} \frac{\mu*\mu(n)}{n}\sum_{m=1}^{[r/2]}{r+1 \choose 2m}B_{2m}\Delta_{-2m}\l(\frac{x}{n}\r)   
	+  O_{r}\l((\log x)^{2/3}(\log\log x)^{1/3}\r).                                          \nonumber
	\end{align*}
	and 
	\begin{align*}                                                               
	&U_r(x) =  \frac{1}{r+1} \sum_{n\leq x} \frac{\mu*|\mu|(n)}{n}\Delta\l(\frac{x}{n}\r)             \\
	& \quad  + \frac{1}{r+1} 
	\sum_{n\leq x} \frac{\mu*|\mu|(n)}{n}\sum_{m=1}^{[r/2]}{r+1 \choose 2m}B_{2m}\Delta_{-2m}\l(\frac{x}{n}\r)   
	- \frac{1}{4\zeta(2)}\log x +  O_{r}\l((\log x)^{2/3}\r).                  \nonumber
	\end{align*}
\end{theorem}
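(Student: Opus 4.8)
The plan is to substitute each of the three functions $f\in\{{\rm id},\phi,\psi\}$ into the master identity \eqref{K-formula} and to evaluate its two hyperbola sums by one uniform device. The key observation is that in every case $\mu*f$ factors through the identity function: for $f={\rm id}$ we simply have $\mu*f=h*{\rm id}$ with $h=\mu$; for $f=\phi=\mu*{\rm id}$ we get $\mu*f=(\mu*\mu)*{\rm id}$, so $h=\mu*\mu$; and for $f=\psi=|\mu|*{\rm id}$ we get $\mu*f=(\mu*|\mu|)*{\rm id}$, so $h=\mu*|\mu|$. These are exactly the three functions appearing in the statement. Consequently
\[
\frac{(\mu*f)(d)}{d}=\sum_{e\mid d}\frac{h(e)}{e},
\]
and inserting this into \eqref{K-formula} and interchanging summations reduces each inner sum to $h(e)/e$ tested against a divisor-counting function, to which \eqref{PP} and \eqref{QQ} can be applied verbatim.

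I would first dispatch the last (Bernoulli) sum in \eqref{K-formula}. Writing $d=ed'$ and regrouping,
\[
\sum_{d\ell\le x}\frac{(\mu*f)(d)}{d\,\ell^{2m}}=\sum_{e\le x}\frac{h(e)}{e}\sum_{k\le x/e}\sigma_{-2m}(k),
\]
because $\sum_{d'\ell=k}\ell^{-2m}=\sigma_{-2m}(k)$. Feeding in \eqref{QQ} breaks this into $\zeta(1+2m)x\sum_{e\le x}h(e)/e^2$, the term $-\tfrac12\zeta(2m)\sum_{e\le x}h(e)/e$, and the genuine remainder $\sum_{e\le x}\frac{h(e)}{e}\Delta_{-2m}\l(\frac{x}{e}\r)$; summed against the binomial and Bernoulli weights, the last of these is precisely the new $\Delta_{-2m}$-sum in the theorem. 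Treating the middle sum of \eqref{K-formula} the same way through \eqref{PP} reproduces the leading sum $\frac1{r+1}\sum_{e\le x}\frac{h(e)}{e}\Delta\l(\frac{x}{e}\r)$ together with the $x\log x$- and $x$-main terms already present in the known formulas. Because $K_r$, $L_r$ and $U_r$ are defined as the remainders after those explicit main terms are removed from $M_r(x;f)$, all polynomial main terms cancel by construction, and I am left with the two displayed $\Delta$-sums plus a package of error terms to be estimated.

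The estimation of that error package is where the three cases separate. Completing $\sum_{e\le x}h(e)/e^2$ and $\sum_{e\le x}(h(e)\log e)/e^2$ to their full Dirichlet-series values supplies the $\zeta(2)$, $\zeta(2)^2$, $\zeta(4)$ and logarithmic-derivative constants seen in the main formulas and leaves tails weighted by $x$ or $x\log x$. For $h=\mu$ these tails are controlled by the prime number theorem in the form $\sum_{e\le t}\mu(e)\ll t\,\delta(t)$, which yields the clean bound $O_r\l(\delta(x)\log x\r)$, while $-\tfrac12\zeta(2m)\sum_{e\le x}\mu(e)/e\ll\delta(x)$ is negligible; this settles $K_r$. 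For $h=\mu*\mu$ and $h=\mu*|\mu|$ the same zero-free-region input for $1/\zeta(s)^2$ and $1/\zeta(2s)$ keeps the tails below the target, but the dominant error now migrates to the first sum $\tfrac12\sum_{n\le x}f(n)/n$ of \eqref{K-formula}. Here I would quote the Walfisz-type evaluation $\sum_{n\le x}\phi(n)/n=x/\zeta(2)+O\l((\log x)^{2/3}(\log\log x)^{1/3}\r)$ and its companion for $\psi$, whose error terms are exactly the $O_r$-terms recorded for $L_r$ and $U_r$. The $\psi$-case carries one additional feature: the Dirichlet series $\sum_n\psi(n)n^{-s-1}=\zeta(s)\zeta(s+1)/\zeta(2s+2)$ has a pole at $s=0$ coming from $\zeta(s+1)$, so the expansion of $\sum_{n\le x}\psi(n)/n$ has a secondary term $-\tfrac1{2\zeta(2)}\log x$ (equivalently, the mean value of $\sum_{d\le x}(|\mu(d)|/d)\{x/d\}$ equals $\tfrac1{2\zeta(2)}\log x$); halving it produces the explicit $-\tfrac1{4\zeta(2)}\log x$ displayed in the $U_r$ formula.

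The main obstacle is not the extraction of the two $\Delta$-sums, which is routine bookkeeping, but driving the leftover down to the sharp errors. This forces the Vinogradov--Korobov zero-free region into play twice: once through the Walfisz bounds for $\sum_{n\le x}\phi(n)/n$ and $\sum_{n\le x}\psi(n)/n$ that govern $L_r$ and $U_r$, and once through $\sum_{e\le t}\mu(e)\ll t\,\delta(t)$ and its analogues for $\mu*\mu$ and $\mu*|\mu|$ that control the tails and deliver the $\delta(x)\log x$ saving for $K_r$. I would also be careful to isolate the $-\tfrac1{4\zeta(2)}\log x$ term for $\psi$ before the error analysis, since it exceeds the final $O_r\l((\log x)^{2/3}\r)$ and must not be absorbed into it.
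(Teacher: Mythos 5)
Your proposal is correct and follows essentially the same route as the paper: it substitutes $f\in\{{\rm id},\phi,\psi\}$ into \eqref{K-formula}, uses the convolution identities $\frac{\mu*f}{\rm id}*{\bf 1}=\frac{h}{\rm id}*\tau$ and $\frac{\mu*f}{\rm id}*{\rm id}_{-2m}=\frac{h}{\rm id}*\sigma_{-2m}$ with $h=\mu,\ \mu*\mu,\ \mu*|\mu|$ (the paper's Lemmas \ref{lem22}--\ref{lem23q}), controls the resulting partial sums of $h(n)/n^2$, $h(n)\log n/n^2$, $h(n)/n$ by the zero-free-region estimates of Lemmas \ref{lem21}, \ref{lem211} and \ref{lem211s}, and sources the dominant errors for $L_r$ and $U_r$, as well as the $-\tfrac{1}{4\zeta(2)}\log x$ term, from the evaluations of $\tfrac12\sum_{n\le x}\phi(n)/n$ and $\tfrac12\sum_{n\le x}\psi(n)/n$ (Lemmas \ref{lem20} and \ref{lem201}). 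No substantive difference from the paper's argument.
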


\begin{remark}
It is easily checked that using the weakest estimate $\Delta_{-2m}(x)=O_{m}(1)$ in the results Theorem~\ref{th11} yields the previously known formulas for $K_r(x)$, $L_r(x)$ and $M_r(x)$ from Eqs. \eqref{K-w}, \eqref{Lr}, and \eqref{Mr}.
\end{remark}


Furthermore, even better estimates of $K_r(x)$ can be achieved by additional assumptions on the Riemann zeta-function. Under the Riemann Hypothesis, a sharper estimate of the partial sum of the M\"{o}bius function has been given by Soundararajan \cite{So}, who proved that  
\begin{align*}                                                              
M(x) := \sum_{n\leq x}\mu(n) = O\l({x^{1/2}}{\eta(x)}\r)
\end{align*}
where 
\begin{align}                                                              \label{eta}
\eta(x) := {\rm exp}\l((\log x)^{1/2}(\log\log x)^{14}\r),
\end{align}
for any large positive number $x> 5$ satisfying $x=[x]+\frac12$.
This result has later been improved by Maier and Montgomery~\cite{MM} and by Balazard and de Roton~\cite{BR}.
By using the above result on $M(x)$, we obtain the next statement.


\begin{theorem}                                   
\label{th12}
Assume the Riemann Hypothesis and let $\Delta(x)$ and $\Delta_{-2m}(x)$ be the error terms  given by Eqs.~\eqref{PP} and \eqref{QQ}, respectively. Then for any large positive number $x > 5$ such that $x=[x]+\frac12$ and fixed positive  integer $r$,  we  have  
\begin{align*}                                                              
K_r(x) &= \frac{1}{r+1} \sum_{d\leq x} \frac{\mu(d)}{d}\Delta\l(\frac{x}{d}\r)               \\
&  + \frac{1}{r+1}  \sum_{d\leq x} \frac{\mu(d)}{d}\sum_{m=1}^{[r/2]}{r+1 \choose 2m} B_{2m} \Delta_{-2m}\l(\frac{x}{d}\r) 
   + O_{r}\l(\frac{ \eta(x) \log x}{x^{1/2}} \r).                                     \nonumber 
\end{align*}
\end{theorem}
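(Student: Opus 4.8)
The plan is to run the argument of Theorem~\ref{th11} essentially verbatim, changing only the final estimation of the M\"obius tails, where the conditional bound of Soundararajan replaces the unconditional one. Starting from \eqref{K-formula} with $f={\rm id}$, so that $\mu*f=\phi$ and $\phi(d)/d=\sum_{e\mid d}\mu(e)/e$, I would interchange the order of summation and apply \eqref{PP} to the inner divisor sum and \eqref{QQ} to the inner sums twisted by $\ell^{-2m}$. This reproduces the two displayed $\Delta$- and $\Delta_{-2m}$-sums exactly, together with the explicit main terms of $M_r(x;{\rm id})$ and a remainder $E(x)$ arising from completing each truncated M\"obius sum $\sum_{e\le x}$ to $\sum_{e=1}^{\infty}$. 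Collecting the pieces, $E(x)$ is a finite combination, with coefficients depending only on $r$ (through $2\gamma-1$, the $B_{2m}$, the binomial coefficients, and the values $\zeta(2m\pm1)$), of the four tail sums
\[
x\log x\sum_{e>x}\frac{\mu(e)}{e^2},\qquad x\sum_{e>x}\frac{\mu(e)\log e}{e^2},\qquad x\sum_{e>x}\frac{\mu(e)}{e^2},\qquad \sum_{e>x}\frac{\mu(e)}{e}.
\]
Up to here nothing differs from Theorem~\ref{th11}; the contribution of the first sum in \eqref{K-formula} is absorbed into the main terms exactly as there. Only the way these four quantities are bounded will change.

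Next I would estimate each tail by Abel summation against $M(t)=\sum_{n\le t}\mu(n)$ and insert Soundararajan's conditional bound $M(t)=O(t^{1/2}\eta(t))$. For the first, integration by parts gives
\[
\sum_{e>x}\frac{\mu(e)}{e^2}=-\frac{M(x)}{x^2}+2\int_x^{\infty}\frac{M(t)}{t^3}\,dt\ll\frac{\eta(x)}{x^{3/2}},
\]
since $\eta$ grows more slowly than any power of $t$, so that $\int_x^{\infty}t^{-5/2}\eta(t)\,dt\ll\eta(x)x^{-3/2}$; the same device yields $\sum_{e>x}\mu(e)(\log e)e^{-2}\ll\eta(x)(\log x)x^{-3/2}$ and $\sum_{e>x}\mu(e)e^{-1}\ll\eta(x)x^{-1/2}$. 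Here the hypothesis $x=[x]+\tfrac12$ enters twice: the boundary term $M(x)$ is evaluated at a point of the form $[x]+\tfrac12$, to which Soundararajan's estimate applies directly, and inside the integral one uses that $M$ is constant on $[[t],[t]+1)$, whence $M(t)=M([t]+\tfrac12)=O(t^{1/2}\eta(t))$ for every real $t$.

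Substituting these bounds into $E(x)$, the first tail contributes $x\log x\cdot\eta(x)x^{-3/2}=\eta(x)(\log x)x^{-1/2}$ and the second contributes $x\cdot\eta(x)(\log x)x^{-3/2}=\eta(x)(\log x)x^{-1/2}$, while the last two contribute only $O(\eta(x)x^{-1/2})$. Hence $E(x)=O_r\!\l(\eta(x)(\log x)x^{-1/2}\r)$, which is exactly the asserted error term, and the theorem follows.

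The only genuine obstacle is the one just flagged: Soundararajan's estimate is available only at half-integer arguments, so one must legitimize its use throughout the Abel-summation integral and at the boundary. This is precisely why the statement is restricted to $x=[x]+\tfrac12$, and the step-function identity $M(t)=M([t]+\tfrac12)$ disposes of it cleanly (note also that $x/d$ is then never an integer, so every $\Delta(x/d)$ and $\Delta_{-2m}(x/d)$ is evaluated away from a jump). Everything else — the reduction to $E(x)$, the matching of the main terms, and the treatment of the first sum in \eqref{K-formula} — is identical to the proof of Theorem~\ref{th11}.
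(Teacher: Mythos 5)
Your proposal is correct and follows essentially the same route as the paper: there, too, one substitutes $f={\rm id}$ into \eqref{K-formula}, rewrites the inner sums via $\frac{\phi}{\rm id}*{\bf 1}=\frac{\mu}{\rm id}*\tau$ and $\frac{\phi}{\rm id}*{\rm id}_{-2m}=\frac{\mu}{\rm id}*\sigma_{-2m}$, applies \eqref{PP} and \eqref{QQ}, and controls the resulting M\"obius sums by the RH-conditional estimates \eqref{eta11}--\eqref{eta41}, which are precisely the partial-summation consequences of Soundararajan's bound that you derive by hand (the paper outsources them to Lemma 3.1 of \cite{IK}). The only cosmetic difference is that you complete the truncated M\"obius sums to infinite series and bound the tails, whereas the paper bounds the partial sums directly; the resulting estimates and the final error term are identical.
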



For our further considerations, let $\rho=\beta +i\gamma$ denote the generic non-trivial zeros of the Riemann zeta-function. Under the assumption that all zeros $\rho$ in the critical strip of $\zeta(s)$ are simple, we are able to prove an additional refinement for the error term $K_r(x)$.


\begin{theorem}                                   
	\label{th13}
	Assume that the zeros of $\zeta(s)$ are simple. Let $T_{*}\geq x^{6}$ be some positive number satisfying the inequality
	\begin{align*}                                                              
	\frac{1}{\zeta(\sigma+iT_{*})} \ll T_{*}^{\varepsilon}
	\end{align*}
	for $\frac12 \leq \sigma \leq 2$. For any large positive number $x> 5$ with $x=[x]+\frac12$ we then have
	\begin{align*}                                                               
	K_r(x)
	& =  \frac{1}{r+1}\sum_{n\leq x}\frac{\mu(n)}{n}\Delta\l(\frac{x}{n}\r) 
	+ \frac{1}{r+1}\sum_{n\leq x}\frac{\mu(n)}{n}\sum_{m=1}^{[r/2]} {r+1 \choose 2m} B_{2m} \Delta_{-2m}\l(\frac{x}{n}\r)  \\  
	&  + \frac{2\gamma + C_{\rm{odd}}(r) -1 }{r+1}\sum_{|\gamma|\leq T_{*}}\frac{x^{\rho-1}}{(\rho-2)\zeta'(\rho)}      
	- \frac{C_{\rm{even}}(r)}{2(r+1)}  \sum_{|\gamma|\leq T_{*}}\frac{x^{\rho-1}}{(\rho-1)\zeta'(\rho)}  \nonumber \\
	& + \frac{1}{r+1}\sum_{|\gamma|\leq T_{*}}\frac{x^{\rho-1}}{(\rho-2)^2 \zeta'(\rho)}                        
	+ O_{r}\l(x^{-3}\r),                                           \nonumber
	\end{align*}
	where the functions $C_{\rm{odd}}(r)$ and $ C_{\rm{even}}(r)$ are given by 
	\begin{align*}                                                              
	C_{\rm{odd}}(r) := \sum_{m=1}^{[r/2]}{r+1 \choose 2m} B_{2m} \zeta(2m+1),
	\end{align*}
	and
	\begin{align*}                                                               
	C_{\rm{even}}(r) := \sum_{m=1}^{[r/2]}{r+1 \choose 2m} B_{2m} \zeta(2m)
	\end{align*}
	for any fixed positive integer $r$.
\end{theorem}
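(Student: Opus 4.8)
The plan is to go back to the exact identity \eqref{K-formula} with $f={\rm id}$, for which $\mu*{\rm id}=\phi$, and to make every asymptotic step quantitative so that $K_r(x)$ is written as the two divisor-type sums of the statement plus three explicitly identifiable ``M\"obius remainder'' sums. Writing $\phi(d)/d=\sum_{a\mid d}\mu(a)/a$ and setting $d=ab$, each inner sum over $d\ell\le x$ becomes $\sum_{a\le x}\frac{\mu(a)}{a}\sum_{b\ell\le x/a}(\cdots)$, so that \eqref{PP} and \eqref{QQ} turn the $\ell^{0}$ and $\ell^{-2m}$ parts into $\sum_{a\le x}\frac{\mu(a)}{a}\Delta(x/a)$ and $\sum_{a\le x}\frac{\mu(a)}{a}\Delta_{-2m}(x/a)$, which are precisely the first two terms claimed. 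After subtracting the known main terms, the ``smooth'' leftover is
\[
\frac{x}{r+1}\Bigl[(\log x+2\gamma-1+C_{\rm odd}(r))\Bigl(A(x)-\tfrac{1}{\zeta(2)}\Bigr)-\Bigl(B(x)-\tfrac{\zeta'(2)}{\zeta(2)^2}\Bigr)\Bigr]-\frac{C_{\rm even}(r)}{2(r+1)}\,m_1(x),
\]
where $A(x)=\sum_{a\le x}\mu(a)a^{-2}$, $B(x)=\sum_{a\le x}\mu(a)(\log a)a^{-2}$ and $m_1(x)=\sum_{a\le x}\mu(a)a^{-1}$. Here one also carries along the bounded terms produced by $\tfrac12[x]=\tfrac{x}{2}-\tfrac12\{x\}$ and by the residues at $s=0$ (which feed the main terms), the trivial-zero contributions being of size $O_r(x^{-3})$.

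Next I would evaluate $A(x)$, $B(x)$ and $m_1(x)$ by truncated Perron formulas; since $x=[x]+\tfrac12$ is not an integer, Perron's formula is exact up to the truncation error. The governing Dirichlet series are $\zeta(s+2)^{-1}$, $\zeta'(s+2)\zeta(s+2)^{-2}$ and $\zeta(s+1)^{-1}$. Shifting the contour to the left and collecting residues at the nontrivial zeros $s=\rho-2$ (resp.\ $s=\rho-1$), which are simple poles by the assumed simplicity of the zeros, yields
\[
\sum_{a\le x}\frac{\mu(a)}{a^2}=\frac{1}{\zeta(2)}+\sum_{|\gamma|\le T_*}\frac{x^{\rho-2}}{(\rho-2)\zeta'(\rho)}+O_r(x^{-3}),\qquad m_1(x)=\sum_{|\gamma|\le T_*}\frac{x^{\rho-1}}{(\rho-1)\zeta'(\rho)}+O_r(x^{-3}).
\]
For $B(x)$ the factor $\zeta(s+2)^{-2}$ makes each $\rho$ a double pole; the Laurent expansion of $\zeta'(s+2)\zeta(s+2)^{-2}$ at $s=\rho-2$ has vanishing simple-pole coefficient, and pairing its $\tfrac{1}{\zeta'(\rho)w^{2}}$ term with $x^{s}/s$ produces the residue $\frac{x^{\rho-2}}{\zeta'(\rho)}\bigl(\frac{\log x}{\rho-2}-\frac{1}{(\rho-2)^2}\bigr)$.

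Substituting these three evaluations into the smooth leftover, the decisive point is that the $\log x$ arising from $B(x)$ exactly cancels the $\log x$ multiplying the $(\rho-2)^{-1}$ sum coming from $A(x)$; multiplying through by $x$ sends $x^{\rho-2}$ to $x^{\rho-1}$ and leaves
\[
\frac{2\gamma+C_{\rm odd}(r)-1}{r+1}\sum_{|\gamma|\le T_*}\frac{x^{\rho-1}}{(\rho-2)\zeta'(\rho)}+\frac{1}{r+1}\sum_{|\gamma|\le T_*}\frac{x^{\rho-1}}{(\rho-2)^2\zeta'(\rho)},
\]
while the $m_1(x)$ term supplies $-\frac{C_{\rm even}(r)}{2(r+1)}\sum_{|\gamma|\le T_*}\frac{x^{\rho-1}}{(\rho-1)\zeta'(\rho)}$. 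Together with the two $\Delta$-sums this is exactly the asserted formula.

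The main obstacle is the quantitative contour analysis needed to reach the very small error $O_r(x^{-3})$. After truncating each Perron integral at height $T_*$ one must bound the two horizontal segments at ${\rm Im}\,s=\pm T_*$ and the shifted vertical segment far to the left. The hypothesis $1/\zeta(\sigma+iT_*)\ll T_*^{\varepsilon}$ for $\tfrac12\le\sigma\le2$ is precisely what controls the horizontal integrals, and the choice $T_*\ge x^{6}$ forces all of these error integrals (which carry a factor $x^{c}/T_*$ with bounded $c$) to be $\ll x^{-3}$, while $x=[x]+\tfrac12$ keeps Perron's formula free of boundary terms. A secondary delicate point is that the simplicity of the zeros is essential not only for the clean single-pole residues but above all for the double-pole computation producing the $(\rho-2)^{-2}$ term: without it the Laurent expansion of $\zeta'/\zeta^{2}$ at $\rho$ would contribute further, uncontrolled terms.
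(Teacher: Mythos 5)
Your proposal is correct and follows essentially the same route as the paper: the identical reduction of $M_r(x;{\rm id})$ via the identities $\frac{\mu*{\rm id}}{{\rm id}}*{\bf 1}=\frac{\mu}{{\rm id}}*\tau$ and $\frac{\mu*{\rm id}}{{\rm id}}*{\rm id}_{-2m}=\frac{\mu}{{\rm id}}*\sigma_{-2m}$ to the two $\Delta$-sums plus the three M\"obius sums $\sum_{n\le x}\mu(n)n^{-2}$, $\sum_{n\le x}\mu(n)n^{-2}\log(x/n)$ (your $A(x)\log x-B(x)$) and $\sum_{n\le x}\mu(n)n^{-1}$, which are then expressed as sums over the nontrivial zeros. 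The only difference is that the paper quotes these three evaluations as Lemma 3.3 (citing the Inoue--Kiuchi preprint), whereas you sketch their proof directly via truncated Perron formulas and contour shifting; your residue computations (including the double pole of $\zeta'(s+2)/\zeta(s+2)^{2}$ at $s=\rho-2$) and the cancellation of the $\log x$ terms reproduce exactly what that lemma encodes.
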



Finally, define the sum 
$$
J_{-\lambda}(T) := \sum_{0 < \gamma \leq T} \frac{1}{|\zeta'(\rho)|^{2\lambda}} 
$$
which is  intimately connected to Mertens function. Assuming the simplicity of the zeros of $\zeta(s)$,  Gonek \cite{G} and  Hejhal \cite{H} independently conjectured  that for any real number $\lambda < 3/2$, we have
\begin{align}
J_{-\lambda}(T)& \asymp T(\log{T})^{(\lambda -1)^2}                      \label{GH}
\end{align} 
We use this conjecture to prove the following:


\begin{theorem}                                                             \label{th33}
	Assume that the Riemann Hypothesis and Gonek-Hejhal conjecture. Then
	\begin{align*}                                                               
	K_r(x)
	&=  \frac{1}{r+1}\sum_{n\leq x}\frac{\mu(n)}{n}\Delta\l(\frac{x}{n}\r)                                           
	+   \frac{1}{r+1}\sum_{d\leq x}\frac{\mu(d)}{d}\sum_{m=1}^{[r/2]}{r+1 \choose 2m} B_{2m}\Delta_{-2m}\l(\frac{x}{d}\r)   \\
	& + O_{r}\l(\frac{(\log x)^{5/4}}{x^{1/2}}\r),    \nonumber 
	\end{align*}
	for any large positive number  $x>  5$ satisfying $x=[x]+\frac12$. 
\end{theorem}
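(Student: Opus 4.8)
The plan is to deduce Theorem~\ref{th33} directly from Theorem~\ref{th13}, by estimating the three sums over the non-trivial zeros $\rho=\beta+i\gamma$ under the two standing hypotheses. Since the Gonek--Hejhal conjecture \eqref{GH} already presupposes the simplicity of the zeros (the quantities $|\zeta'(\rho)|^{-2\lambda}$ are otherwise meaningless), the hypothesis of Theorem~\ref{th13} is in force and its explicit formula may be taken as the starting point. Fixing $T_*$ as in that theorem, it then suffices to show that each of
\[
\sum_{|\gamma|\leq T_*}\frac{x^{\rho-1}}{(\rho-2)\zeta'(\rho)},\qquad
\sum_{|\gamma|\leq T_*}\frac{x^{\rho-1}}{(\rho-1)\zeta'(\rho)},\qquad
\sum_{|\gamma|\leq T_*}\frac{x^{\rho-1}}{(\rho-2)^2\zeta'(\rho)}
\]
is $O\bigl(x^{-1/2}(\log x)^{5/4}\bigr)$; the $r$-dependent constants $\tfrac{2\gamma+C_{\mathrm{odd}}(r)-1}{r+1}$, $\tfrac{C_{\mathrm{even}}(r)}{2(r+1)}$ and $\tfrac{1}{r+1}$ are $O_r(1)$, and the residual $O_r(x^{-3})$ of Theorem~\ref{th13} is then absorbed.

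First I would invoke the Riemann Hypothesis to write every zero as $\rho=\tfrac12+i\gamma$, so that $|x^{\rho-1}|=x^{-1/2}$ and $|\rho-1|\asymp|\rho-2|\asymp 1+|\gamma|$. Grouping each zero with its conjugate $\bar\rho=\tfrac12-i\gamma$ (again a zero, with $|\zeta'(\bar\rho)|=|\zeta'(\rho)|$ and $|x^{\bar\rho-1}|=x^{-1/2}$), the three sums are each bounded in absolute value by a constant multiple of
\[
x^{-1/2}\sum_{0<\gamma\leq T_*}\frac{1}{(1+\gamma)^{k}\,|\zeta'(\rho)|},
\]
with $k=1$ for the first two and $k=2$ for the third. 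Everything thus reduces to controlling these weighted sums over positive ordinates.

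The decisive step is to estimate these by partial summation against the Gonek--Hejhal bound. Taking $\lambda=\tfrac12<\tfrac32$ in \eqref{GH} gives, for $A(t):=\sum_{0<\gamma\leq t}|\zeta'(\rho)|^{-1}$, the estimate $A(t)\asymp t(\log t)^{1/4}$. Writing $\gamma_1$ for the least positive ordinate and applying Abel summation, I obtain
\[
\sum_{0<\gamma\leq T_*}\frac{1}{(1+\gamma)\,|\zeta'(\rho)|}
=\frac{A(T_*)}{1+T_*}+\int_{\gamma_1}^{T_*}\frac{A(t)}{(1+t)^2}\,dt
\ll(\log T_*)^{1/4}+\int_{\gamma_1}^{T_*}\frac{(\log t)^{1/4}}{t}\,dt
\ll(\log T_*)^{5/4},
\]
while the analogous computation for $k=2$ yields a convergent integral and hence an $O(1)$ bound. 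Choosing $T_*$ in a bounded dyadic range above $x^6$ — permissible since one can always locate such a $T_*$ away from the ordinates so that $1/\zeta(\sigma+iT_*)\ll T_*^{\varepsilon}$ holds for $\tfrac12\leq\sigma\leq 2$ — gives $\log T_*\asymp\log x$. Hence the first two sums contribute $O_r\bigl(x^{-1/2}(\log x)^{5/4}\bigr)$ and the third contributes $O_r\bigl(x^{-1/2}\bigr)$; combining these with the $O_r(x^{-3})$ term of Theorem~\ref{th13} yields the claimed error.

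I expect the main technical point to be the partial-summation estimate of the last display, where the exponent $5/4=1+\tfrac14$ arises precisely from integrating the Gonek--Hejhal density $(\log t)^{1/4}$ against $dt/t$; obtaining the correct power of the logarithm relies on the choice $\lambda=\tfrac12$ and on the conjugate pairing that replaces $|\rho-j|$ by $1+|\gamma|$. A secondary point requiring care is the admissible selection of $T_*$: one must confirm that a value $T_*\asymp x^6$ satisfying the required lower bound on $|\zeta(\sigma+iT_*)|$ exists, so that $\log T_*\asymp\log x$ rather than something larger, since otherwise the logarithmic savings would be diluted.
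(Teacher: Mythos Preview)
Your proposal is correct and follows essentially the same route as the paper: start from the explicit formula of Theorem~\ref{th13}, write $\rho=\tfrac12+i\gamma$ under RH, and bound the resulting sums over zeros by partial summation against the Gonek--Hejhal estimate with $\lambda=\tfrac12$ to obtain the $(\log T_*)^{5/4}\asymp(\log x)^{5/4}$ saving. You are in fact a bit more careful than the paper in separately treating the $(\rho-2)^2$ sum and in justifying that $T_*$ may be chosen with $\log T_*\asymp\log x$.
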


\section{Proofs of Theorems  \ref{th11} and \ref{th12}}

In order to prove our main results, we first show some necessary lemmas.

\subsection{Auxiliary lemmas}


\begin{lemma}          \label{lem21}
For  any large  positive  number $x > 5$,  we have
\begin{align}
 \sum_{n\leq x}\frac{\mu(n)}{n^2}  &= \frac{1}{\zeta(2)} + O\l(\frac{\delta(x)}{x}\r),                                  \label{delta11} \\
 \sum_{n\leq x}\frac{\mu(n)}{n^2} \log n  &= \frac{\zeta'(2)}{\zeta^{2}(2)}+ O\l(\frac{\delta(x)}{x}\r),                \label{delta21}
\end{align}
and
\begin{align}
\sum_{n\leq x}\frac{\mu(n)}{n} &=  O\l(\delta(x)\r),                                                               \label{delta41}
\end{align}
where $\delta(x)$ is given by Eq.~\eqref{delta}. 
Assume that $x=[x]+\frac{1}{2}$. Under the Riemann Hypothesis we have
\begin{align}
\sum_{n\leq x}\frac{\mu(n)}{n^2}  &= \frac{1}{\zeta(2)} + O\l(\frac{\eta(x)}{x^{3/2}}\r),                               \label{eta11}  \\
 \sum_{n\leq x}\frac{\mu(n)}{n^2} \log n &= \frac{\zeta'(2)}{\zeta^{2}(2)}+ O\l(\frac{\eta(x)\log x}{x^{3/2}}\r),       \label{eta21}
\end{align}
and
\begin{align}
 \sum_{n\leq x}\frac{\mu(n)}{n} &= O\l(\frac{\eta(x)}{x^{1/2}}\r).             \label{eta41}
\end{align}
for any large  positive  number $x > 5$. Here $\eta(x)$ is given by Eq.~\eqref{eta}.  
\end{lemma}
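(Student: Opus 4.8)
The plan is to deduce all six estimates from Abel (partial) summation applied to the Mertens function $M(t) = \sum_{n \le t} \mu(n)$, together with the exact values of the relevant complete Dirichlet series. Recall that $\sum_{n=1}^{\infty} \mu(n) n^{-2} = 1/\zeta(2)$, that differentiating $\sum_{n=1}^{\infty} \mu(n) n^{-s} = 1/\zeta(s)$ at $s=2$ gives $\sum_{n=1}^{\infty} \mu(n) (\log n) n^{-2} = \zeta'(2)/\zeta^{2}(2)$, and that $\sum_{n=1}^{\infty} \mu(n)/n = 0$, the last being equivalent to the prime number theorem. Writing each finite sum as the corresponding complete sum minus its tail reduces everything to bounding tails $\sum_{n > x} \mu(n) g(n)$ with $g(n) \in \{ n^{-2}, n^{-2}\log n \}$, and to bounding $\sum_{n \le x} \mu(n)/n$ directly.

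For the three unconditional estimates \eqref{delta11}, \eqref{delta21}, \eqref{delta41} I would use the Vinogradov--Korobov form of the prime number theorem, which gives $M(t) = O(t\,\delta(t))$ with $\delta$ as in \eqref{delta}. Abel summation converts the tail $\sum_{n > x} \mu(n) g(n)$ into a boundary term $\pm M(x) g(x)$ plus $\int_x^{\infty} M(t) g'(t)\, dt$; inserting $M(t) = O(t\,\delta(t))$, using $|g'(t)| \ll t^{-3}$ (resp.\ $t^{-3}\log t$), and using that $\delta$ is decreasing yields $\int_x^{\infty} \delta(t)\, t^{-2}\, dt = O(\delta(x)/x)$ and $\int_x^{\infty} \delta(t)(\log t)\, t^{-2}\, dt = O(\delta(x)(\log x)/x)$. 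For \eqref{delta41} the vanishing of the complete sum first forces $\int_1^{\infty} M(t)\, t^{-2}\, dt = 0$, so that $\sum_{n \le x} \mu(n)/n = M(x)/x - \int_x^{\infty} M(t)\, t^{-2}\, dt$, and both pieces are controlled by the same type of tail integral.

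For the conditional estimates \eqref{eta11}, \eqref{eta21}, \eqref{eta41} I would invoke Soundararajan's bound $M(t) = O(t^{1/2}\eta(t))$, with $\eta$ as in \eqref{eta}, which under the Riemann Hypothesis is valid at the half-integers $t = [t] + \tfrac12$. Since $M$ is constant on each interval $[n, n+1)$ and $n + \tfrac12$ lies in that interval, this bound extends to all real $t$, so the identical Abel-summation scheme applies. The boundary terms are estimated directly using the hypothesis $x = [x] + \tfrac12$, while the tail integrals $\int_x^{\infty} \eta(t)\, t^{-\sigma}\, dt$ (with $\sigma = \tfrac52$ for \eqref{eta11}, $\sigma = \tfrac32$ for \eqref{eta41}, and an extra factor $\log t$ for \eqref{eta21}) are handled by a single integration by parts: since $\eta$ is slowly varying, with $\eta'(t)/\eta(t) = O\bigl( (\log\log t)^{14} / (t (\log t)^{1/2}) \bigr) = o(1/t)$, the secondary integral is of smaller order and $\int_x^{\infty} \eta(t)\, t^{-\sigma}\, dt = O(\eta(x)\, x^{1-\sigma})$.

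The only place where genuine care is needed is the treatment of these tail integrals of the slowly varying factors $\delta$ and $\eta$. For $\delta$ the difficulty is cosmetic: the crude bounds throw off spurious powers of $\log x$ (as in the $O(\delta(x)(\log x)/x)$ above and in $\int_x^{\infty} \delta(t)\, t^{-1}\, dt$ for \eqref{delta41}), but because $\delta(x)\,(\log x)^{A} = \exp\bigl( -C (\log x)^{3/5}(\log\log x)^{-1/5} + A\log\log x \bigr)$ and the correction $A\log\log x$ is dominated by the main exponent, every such factor is absorbed by shrinking the constant $C$ in \eqref{delta} infinitesimally; this recovers the stated $O(\delta(x)/x)$ and $O(\delta(x))$. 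For $\eta$, by contrast, the $\log x$ appearing in \eqref{eta21} must be kept, and the integration-by-parts estimate has to be carried out carefully so as to produce the clean power $x^{1-\sigma}$ multiplying the slowly varying $\eta(x)$. With these integral bounds in place, each of the six displayed formulas follows at once.
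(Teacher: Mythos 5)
Your proposal is correct, but it takes a genuinely different route from the paper, because the paper does not actually prove Lemma~\ref{lem21}: it disposes of it entirely by citation, quoting Eqs.~\eqref{delta11} and \eqref{delta21} from Lemmas 2.2 and 2.3 of \cite{SS}, Eq.~\eqref{delta41} from \cite{Jia}, and the conditional estimates \eqref{eta11}--\eqref{eta41} from Lemma 3.1 of the preprint \cite{IK}. You instead give a self-contained derivation from just two inputs --- the Walfisz-type bound $M(t)\ll t\,\delta(t)$ and Soundararajan's bound $M(t)\ll t^{1/2}\eta(t)$ --- by writing each finite sum as a complete sum minus a tail and applying Abel summation, and the three delicate points are all handled correctly: the exact values $\sum_{n\geq 1}\mu(n)n^{-2}=1/\zeta(2)$, $\sum_{n\geq 1}\mu(n)(\log n)n^{-2}=\zeta'(2)/\zeta^{2}(2)$ and $\sum_{n\geq 1}\mu(n)/n=0$; the absorption of the spurious logarithmic factors (including the fractional power of $\log x$ produced by $\int_x^\infty \delta(t)t^{-1}\,dt$) by shrinking the constant $C$ in \eqref{delta}, which is legitimate precisely because the lemma leaves $C$ unspecified; and the extension of Soundararajan's half-integer bound to all real $t$ (needed inside the Abel integrals) via the constancy of $M$ on $[n,n+1)$, combined with the slow-variation estimate $\eta'(t)/\eta(t)=o(1/t)$ to get $\int_x^\infty \eta(t)t^{-\sigma}\,dt\ll \eta(x)x^{1-\sigma}$. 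What your route buys is transparency: it exhibits Lemma~\ref{lem21} as nothing more than the transfer, through partial summation, of the best unconditional and conditional bounds for the Mertens function, so the quality of each error term is visibly equivalent to the corresponding bound on $M(x)$. What the paper's route buys is brevity and reliance on published statements already in the exact form required; in particular the cited lemmas of \cite{SS} and \cite{Jia} come with explicit constants and hold without any renormalization of $C$, whereas your argument silently renames the constant at several steps.
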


\begin{proof}
Eqs.~\eqref{delta11} and \eqref{delta21} follow from  Lemmas 2.2 and 2.3 in \cite{SS}. The proof of Eq.~\eqref{delta41} can be found in \cite{Jia}. 
The formulas \eqref{eta11}--\eqref{eta41} follow from  Lemma 3.1 in~\cite{IK}.                                    
\end{proof}


\begin{lemma}          
\label{lem20}
For any large  positive  number $x > 5$, we have 
\begin{align*}
 \sum_{n\leq x}\frac{\phi(n)}{n}  &= \frac{x}{\zeta(2)} +  O\l((\log x)^{2/3}(\log\log x)^{1/3}\r).       
\end{align*}
\end{lemma}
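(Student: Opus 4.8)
The plan is to reduce the sum to an explicit main term plus a fractional-part sum, and then to control the latter by the sharp cancellation available for exponential sums twisted by the M\"obius function. First I would use $\phi = {\rm id}*\mu$, which after division by $n$ reads $\phi(n)/n = \sum_{d\mid n}\mu(d)/d$. Interchanging the order of summation then gives
$$
\sum_{n\le x}\frac{\phi(n)}{n} = \sum_{d\le x}\frac{\mu(d)}{d}\left\lfloor\frac{x}{d}\right\rfloor ,
$$
and writing $\lfloor x/d\rfloor = x/d - \{x/d\}$ splits the right-hand side into a main term $x\sum_{d\le x}\mu(d)/d^2$ and an error $-\sum_{d\le x}\frac{\mu(d)}{d}\{x/d\}$.

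For the main term I would apply Eq.~\eqref{delta11}, which yields $x\sum_{d\le x}\mu(d)/d^2 = x/\zeta(2) + O(\delta(x))$; since $\delta(x) = o(1)$ this is $x/\zeta(2)$ up to an error well inside the stated remainder. Thus everything reduces to bounding $R(x) := \sum_{d\le x}\frac{\mu(d)}{d}\{x/d\}$. Here I would strip off the constant part of the sawtooth by setting $\varrho(t) := \{t\} - \tfrac12$, so that
$$
R(x) = \sum_{d\le x}\frac{\mu(d)}{d}\,\varrho\!\left(\frac{x}{d}\right) + \frac12\sum_{d\le x}\frac{\mu(d)}{d},
$$
where the last sum is $O(\delta(x))$ by Eq.~\eqref{delta41} and hence again negligible against $(\log x)^{2/3}(\log\log x)^{1/3}$.

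The crux is therefore the estimate $\sum_{d\le x}\frac{\mu(d)}{d}\varrho(x/d) = O\bigl((\log x)^{2/3}(\log\log x)^{1/3}\bigr)$. I would attack this through a truncated Fourier (Vaaler-type) approximation of $\varrho$, reducing the problem to exponential sums $\sum_{d\le x}\frac{\mu(d)}{d}\,e(hx/d)$ with $e(t):=e^{2\pi i t}$, and then invoke the Vinogradov--Korobov bounds for such M\"obius-weighted exponential sums, which are precisely the ingredient producing the $(\log x)^{2/3}(\log\log x)^{1/3}$ saving. Since this estimate for $\sum_{n\le x}\phi(n)/n$ is classical, it may alternatively be quoted directly from Walfisz or Sitaramachandra Rao.

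I expect this last step to be the genuine obstacle. It cannot be obtained by partial summation from the classical expansion $\sum_{n\le x}\phi(n) = \tfrac{x^2}{2\zeta(2)} + O\bigl(x(\log x)^{2/3}(\log\log x)^{1/3}\bigr)$: integrating the error through $\int_1^x t^{-2}E(t)\,dt$ would only give $O\bigl((\log x)^{5/3}(\log\log x)^{1/3}\bigr)$, which is far weaker than required. The sharp, bounded-order-in-the-exponent saving thus relies essentially on the deep exponential-sum input and is not accessible from the elementary manipulations of the Lemma~\ref{lem21} estimates alone.
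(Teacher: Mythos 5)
Your proposal is correct and follows essentially the same route as the paper: write $\phi={\rm id}*\mu$, split $[x/d]=x/d-\vartheta(x/d)-\tfrac12$, handle the main term and the $\tfrac12\sum\mu(d)/d$ piece via Eqs.~\eqref{delta11} and \eqref{delta41}, and reduce everything to the M\"obius-twisted sawtooth sum $\sum_{d\le x}\frac{\mu(d)}{d}\vartheta(x/d)$, which the paper quotes directly from Liu \cite{L} with exactly the bound $O\l((\log x)^{2/3}(\log\log x)^{1/3}\r)$. Your identification of that sum as the genuine crux (not reachable by partial summation from $\sum_{n\le x}\phi(n)$) is accurate; the only small caveat is that Walfisz's classical treatment yields $(\log\log x)^{4/3}$ rather than $(\log\log x)^{1/3}$ in the exponent, so the specific reference needed is Liu's 2016 refinement rather than \cite{W}.
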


\begin{proof}  
For any large positive  number $x\geq 5$, we use  the result  of Liu  in \cite{L}
$$
\sum_{\ell\leq x}\frac{\mu(\ell)}{\ell}\vartheta\l(\frac{x}{\ell}\r) = O\l((\log x)^{2/3}(\log\log x)^{1/3}\r),
$$
the fact that $ \phi={\rm id}*\mu$, and  Eqs.~\eqref{delta11}, \eqref{delta41} to obtain the formula
\begin{align*}
\sum_{n\leq x}\frac{\phi(n)}{n} &= \sum_{\ell\leq x}\frac{\mu(\ell)}{\ell}\l(\frac{x}{\ell}- \vartheta\l(\frac{x}{\ell}\r) -\frac12\r)
\\
 &= \frac{x}{\zeta(2)} + O\l((\log x)^{2/3}(\log\log x)^{1/3}\r),
\end{align*}
where $\vartheta(x)$ is the oscillatory function defined by $x-[x]-\frac12$. This completes the proof. 
\end{proof}


\begin{lemma}   \label{lem201}
For any large  positive  number $x > 5$, we have 
\begin{align*}
\sum_{n\leq x}\frac{\psi(n)}{n}&=\frac{\zeta(2)}{\zeta(4)}x  - \frac{1}{2\zeta(2)}\log x
  + O\l((\log x)^{2/3}\r).                                                                    
\end{align*}
\end{lemma}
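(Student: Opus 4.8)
The plan is to start from the convolution identity $\psi = {\rm id}*|\mu|$, which gives $\psi(n)/n = \sum_{d\mid n}|\mu|(d)/d$ and hence
$$
\sum_{n\le x}\frac{\psi(n)}{n} = \sum_{\ell\le x}\frac{|\mu|(\ell)}{\ell}\left[\frac{x}{\ell}\right].
$$
First I would remove the squarefree restriction by writing $|\mu|(\ell)=\sum_{d^2\mid \ell}\mu(d)$ and setting $\ell=d^2m$; since $d^2\mid\ell\le x$ forces $d\le\sqrt{x}$, this recasts the sum as $\sum_{d\le\sqrt{x}}\frac{\mu(d)}{d^2}\,G(x/d^2)$, where I abbreviate $G(y):=\sum_{m\le y}\frac1m\left[\frac ym\right]$. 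The conceptual heart of the reduction is to recognize, via $\sum_{m\mid n}\frac1m=\sigma(n)/n$, that $G(y)=\sum_{n\le y}\sigma_{-1}(n)$ is the summatory function of $\sigma_{-1}$, where $\sigma_{-1}(n):=\sigma(n)/n$ and $\sigma(n)=\sum_{m\mid n}m$.

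Next I would insert the classical asymptotic
$$
\sum_{n\le y}\sigma_{-1}(n) = \zeta(2)\,y - \tfrac12\log y + C_1 + O\l((\log y)^{2/3}\r)
$$
for a suitable constant $C_1$, and sum over $d\le\sqrt{x}$. The leading terms $\zeta(2)(x/d^2)$ contribute $\zeta(2)x\sum_{d\le\sqrt{x}}\mu(d)/d^4 = \frac{\zeta(2)}{\zeta(4)}x + O(x^{-1/2})$, using $\sum_{d\ge1}\mu(d)/d^4=1/\zeta(4)$. The terms $-\tfrac12\log(x/d^2)=-\tfrac12\log x+\log d$ contribute $-\frac{1}{2\zeta(2)}\log x + O(1)$, using $\sum_{d\le\sqrt{x}}\mu(d)/d^2 = 1/\zeta(2)+O(x^{-1/2})$ together with the convergence of $\sum_{d\ge1}\mu(d)(\log d)/d^2$. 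The constant $C_1$ contributes only $O(1)$, and the remainder is controlled by $(\log x)^{2/3}\sum_{d\ge1}d^{-2}=O((\log x)^{2/3})$ since $\log(x/d^2)\le\log x$. Collecting these pieces yields exactly the claimed formula, the secondary term $-\frac{1}{2\zeta(2)}\log x$ arising cleanly from the $-\tfrac12\log y$ in the $\sigma_{-1}$ asymptotic weighted by $\sum\mu(d)/d^2$.

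The main obstacle is justifying the error term $O((\log y)^{2/3})$ in the asymptotic for $\sum_{n\le y}\sigma_{-1}(n)$. Writing $G(y)=\zeta(2)y-\sum_{d\le y}\{y/d\}/d+O(1)$, this amounts to the cancellation estimate $\sum_{d\le y}\{y/d\}/d=\tfrac12\log y + C + O((\log y)^{2/3})$ for the weighted sawtooth sum, which rests on nontrivial exponential-sum bounds of Vinogradov type. This plays, for $\psi$, the role that Liu's estimate for $\sum_{\ell\le x}\frac{\mu(\ell)}{\ell}\vartheta(x/\ell)$ played for $\phi$ in the proof of Lemma~\ref{lem20}; the loglog-free shape of the error here, as opposed to the $(\log x)^{2/3}(\log\log x)^{1/3}$ occurring for $\phi$, reflects that the relevant weight is the constant function $1$ rather than $\mu$. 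Once this input is granted, the rest of the argument is the routine bookkeeping of absolutely convergent Dirichlet-type constants described above.
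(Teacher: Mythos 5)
Your derivation is correct, but it takes a genuinely different route from the paper: the paper's entire proof of Lemma~\ref{lem201} is a one-line citation of \cite[Satz 3]{W}, importing the asymptotic for $\sum_{n\le x}\psi(n)/n$ wholesale. You instead reduce the statement, via $\psi={\rm id}*|\mu|$, the identity $|\mu|(\ell)=\sum_{d^2\mid\ell}\mu(d)$, and the identification $G(y)=\sum_{m\le y}\frac1m[y/m]=\sum_{n\le y}\sigma(n)/n$, to the classical Walfisz asymptotic $\sum_{n\le y}\sigma(n)/n=\zeta(2)y-\tfrac12\log y+C_1+O((\log y)^{2/3})$; the bookkeeping over $d\le\sqrt{x}$ is then routine and your constants check out ($\zeta(2)/\zeta(4)$ from $\sum\mu(d)/d^4$ for the main term, $1/(2\zeta(2))$ from $\sum\mu(d)/d^2$ for the logarithm, with $\sum\mu(d)(\log d)/d^2$ and $C_1$ absorbed as $O(1)$). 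What your route buys is transparency: it makes visible exactly where the secondary term $-\frac{1}{2\zeta(2)}\log x$ and the exponent $2/3$ come from, and it correctly identifies the sole nonelementary input as the sawtooth estimate $\sum_{d\le y}\{y/d\}/d=\tfrac12\log y+C+O((\log y)^{2/3})$ --- though that input is itself a theorem in the very book \cite{W} the paper cites, so nothing is saved in logical economy. One small point of hygiene: the error term $O((\log y)^{2/3})$ should be read as $O(1+(\log y)^{2/3})$ so that it remains meaningful for $d$ near $\sqrt{x}$, where $y=x/d^2$ is bounded; after weighting by $d^{-2}$ those terms still contribute only $O(1)$, so the conclusion is unaffected.
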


\begin{proof}  
The proof can be found in~\cite[Satz 3]{W}.  
\end{proof}


\begin{lemma}          
\label{lem211}
For  any large  positive  number $x > 5$,  we have
\begin{align}
 \sum_{n\leq x}\frac{\mu*\mu(n)}{n^2}  &= \frac{1}{\zeta^{2}(2)} + O\l(\frac{\delta(x)}{x}\r),                             
 \label{delta111} \\
 \sum_{n\leq x}\frac{\mu*\mu(n)}{n^2} \log n  &= 2\frac{\zeta'(2)}{\zeta^{3}(2)}+ O\l(\frac{\delta(x)}{x}\r),               \label{delta211}
\end{align}
and
\begin{align}
\sum_{n\leq x}\frac{\mu*\mu(n)}{n} &=  O_{}\l(\delta(x)\r).                    \label{delta411}
\end{align}
\end{lemma}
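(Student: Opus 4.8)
The plan is to read off the three leading constants from the Dirichlet series $\sum_{n=1}^{\infty}(\mu*\mu)(n)n^{-s}=\zeta(s)^{-2}$ and then to control the remaining tails by partial summation. Evaluating at $s=2$ gives the constant $\zeta(2)^{-2}$ for the first formula, while differentiating $\sum_{n}(\mu*\mu)(n)n^{-s}=\zeta(s)^{-2}$ and using $-\tfrac{d}{ds}\zeta(s)^{-2}=2\zeta'(s)\zeta(s)^{-3}$ produces $2\zeta'(2)\zeta(2)^{-3}$ for the second; the third formula carries no main term, because $\zeta(s)^{-2}\to 0$ as $s\to 1^{+}$. Thus in each case it remains only to estimate a tail of the shape $\sum_{n>x}(\mu*\mu)(n)g(n)$ with $g(n)\in\{n^{-2},\,n^{-2}\log n\}$, respectively the partial sum $\sum_{n\le x}(\mu*\mu)(n)n^{-1}$.

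The key input I would use is the prime-number-theorem-strength summatory bound
$$
\Sigma(x):=\sum_{n\le x}(\mu*\mu)(n)=O\l(x\,\delta(x)\r),
$$
which rests on exactly the same classical zero-free region as the Mertens-type estimates underlying Lemma~\ref{lem21}: since $\zeta(s)^{-2}$ is analytic and of polynomial growth throughout the Vinogradov--Korobov region, Perron's formula followed by shifting the contour to the boundary of that region yields $\Sigma(x)=O(x\delta(x))$, with $\delta(x)$ as in Eq.~\eqref{delta}. Granting this, each formula follows by Abel summation against $\Sigma$. For the first, $\sum_{n>x}(\mu*\mu)(n)n^{-2}=-\Sigma(x)x^{-2}+2\int_{x}^{\infty}\Sigma(t)t^{-3}\,dt$, and inserting $\Sigma(t)=O(t\delta(t))$ together with the eventual monotonicity of $\delta$ gives $O(\delta(x)/x)$. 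The second is identical but with the weight $t^{-2}\log t$, producing an additional factor $\log x$. The third uses $\sum_{n\le x}(\mu*\mu)(n)n^{-1}=\Sigma(x)x^{-1}-\int_{x}^{\infty}\Sigma(t)t^{-2}\,dt$, where the full integral from $1$ to $\infty$ vanishes because $\sum_{n}(\mu*\mu)(n)n^{-1}$ sums to $\zeta(1)^{-2}=0$; this produces the first term $O(\delta(x))$ and an integral of size $O((\log x)^{2/5}\delta(x))$.

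The main obstacle is the sharpness of the error term, and it is here that I would \emph{resist} proving Lemma~\ref{lem211} directly from Lemma~\ref{lem21} by expanding $(\mu*\mu)=\mu*\mu$. Writing $\sum_{n\le x}(\mu*\mu)(n)n^{-2}=\sum_{d\le x}\mu(d)d^{-2}\sum_{e\le x/d}\mu(e)e^{-2}$ and applying Lemma~\ref{lem21} to the inner sum forces control of $\delta(x/d)$ for $d$ as large as $\sqrt{x}$, and the Dirichlet hyperbola method with the Mertens bound $M(t)=O(t\delta(t))$ meets the same barrier; both routes deliver only the weaker $\delta(\sqrt{x})$, since taking absolute values destroys the cancellation in $\mu(d)$ needed near the balance point $d\approx\sqrt{x}$. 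This is precisely why the argument must run through the single combined summatory function $\Sigma(x)$ rather than through a product of two Mertens-type estimates. The one remaining bookkeeping point is that the spurious factors $\log x$ and $(\log x)^{2/5}$ in the second and third formulas are harmlessly absorbed into $\delta(x)$ at the cost of a slightly smaller constant in Eq.~\eqref{delta}: since $(\log x)^{3/5}(\log\log x)^{-1/5}$ dominates any fixed power of $\log\log x$, one has $(\log x)^{a}\exp\l(-C(\log x)^{3/5}(\log\log x)^{-1/5}\r)=O\l(\exp\l(-C'(\log x)^{3/5}(\log\log x)^{-1/5}\r)\r)$ for every $C'<C$.
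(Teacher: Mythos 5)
Your proposal is correct but follows a genuinely different route from the paper's. The paper does not prove \eqref{delta111} and \eqref{delta211} at all: they are quoted from Eqs.~(1.13) and (1.14) of the companion preprint \cite{IK1}. Its only actual argument is for \eqref{delta411}, and there it uses exactly the convolution expansion you rejected, writing $\sum_{n\leq x}\frac{\mu*\mu(n)}{n}=\sum_{d\leq x}\frac{\mu(d)}{d}\sum_{\ell\leq x/d}\frac{\mu(\ell)}{\ell}$ and invoking \eqref{delta41}. Your route -- a single summatory bound $\Sigma(x)=\sum_{n\leq x}\mu*\mu(n)=O(x\delta(x))$ obtained from Perron's formula and the Vinogradov--Korobov region, followed by three partial summations -- is self-contained and, more importantly, is the only one of the two routes that can reach the errors $O(\delta(x)/x)$ in \eqref{delta111} and \eqref{delta211}: as you correctly observe, expanding into two Mertens-type sums loses cancellation near the balance point $d\approx\sqrt{x}$, and even the hyperbola method yields only $O(\delta(\sqrt{x})/\sqrt{x})$ there, a polynomial loss that no adjustment of the constant $C$ can repair. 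This is presumably why the paper defers those two estimates to \cite{IK1}; your argument actually supplies a proof of them.

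Two caveats. First, your objection is overstated for \eqref{delta411} itself: that bound carries no polynomial factor, so the hyperbola-method output $O(\delta(\sqrt{x})\log x)$ is perfectly acceptable, since $\delta(\sqrt{x})\log x\ll\exp\left(-C'(\log x)^{3/5}(\log\log x)^{-1/5}\right)$ for any $C'<2^{-3/5}C$ -- precisely the constant-shrinking convention you invoke at the end of your own proposal to absorb powers of $\log x$. This is how the paper's proof of \eqref{delta411} should be read, although its two-line version does need that repair: taking absolute values over the whole range $d\leq x$ gives only $O(1)$, because $\sum_{k}\delta(2^{k})$ converges, so the symmetric split at $\sqrt{x}$ is essential. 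Second, in your third formula you should justify the convergence of $\sum_{n}\mu*\mu(n)/n$ to $0$ -- for instance via the absolute convergence of $\int_{1}^{\infty}\Sigma(t)t^{-2}\,dt$ granted by $\Sigma(t)=O(t\delta(t))$, together with Abel's theorem for Dirichlet series -- rather than by formally evaluating $1/\zeta(1)^{2}$; this is a minor, fixable point.
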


\begin{proof}  
Eqs.~\eqref{delta111} and \eqref{delta211} follow from Eqs.~(1.13) and (1.14) in~\cite{IK1}, respectively. 
For Eq.~\eqref{delta411}, we use Eq.~\eqref{delta41} to get 
\begin{align*}
&   \sum_{n\leq x}\frac{\mu*\mu(n)}{n}
 = \sum_{d\leq x}\frac{\mu(d)}{d}\sum_{\ell\leq x/d}\frac{\mu(\ell)}{\ell}  
 =  O\l(\delta(x)\r).
\end{align*}
This completes the proof. 
\end{proof}


\begin{lemma}          
\label{lem211s}
For  any large   positive  number $x > 5$,  we have
\begin{align}
 \sum_{n\leq x}\frac{|\mu|*\mu(n)}{n^2}  &= \frac{1}{\zeta^{}(4)} + O\l(\frac{\delta(x)}{x}\r),                              \label{delta111w} \\
 \sum_{n\leq x}\frac{|\mu|*\mu(n)}{n^2} \log n  &= 2\frac{\zeta'(4)}{\zeta^{2}(4)}+ O\l(\frac{\delta(x)}{x}\r),               \label{delta211w}
\end{align}
and
\begin{align}
\sum_{n\leq x}\frac{|\mu|*\mu(n)}{n} &=  \frac{1}{\zeta(2)} +O_{}\l(\delta(x)\r).                                             \label{delta411w}
\end{align}
\end{lemma}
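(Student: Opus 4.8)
The plan is to exploit the multiplicative structure of $|\mu|*\mu$ by passing to Dirichlet series. Writing each factor as an Euler product, one has $\sum_{n\geq 1}|\mu(n)|n^{-s}=\prod_{p}(1+p^{-s})=\zeta(s)/\zeta(2s)$ and $\sum_{n\geq 1}\mu(n)n^{-s}=1/\zeta(s)$, so that
\begin{align*}
\sum_{n=1}^{\infty}\frac{|\mu|*\mu(n)}{n^{s}}=\frac{\zeta(s)}{\zeta(2s)}\cdot\frac{1}{\zeta(s)}=\frac{1}{\zeta(2s)}=\sum_{m=1}^{\infty}\frac{\mu(m)}{m^{2s}}.
\end{align*}
Comparing Dirichlet coefficients yields the clean identity $|\mu|*\mu(n)=\mu(\sqrt{n})$ when $n$ is a perfect square and $|\mu|*\mu(n)=0$ otherwise. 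This is the decisive step: it collapses each of the three sums over $n\leq x$ into a single sum over $m\leq\sqrt{x}$, after which only elementary tail estimates and the already-proven Lemma~\ref{lem21} are required.

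For Eqs.~\eqref{delta111w} and \eqref{delta211w} I would substitute $n=m^{2}$ to obtain
\begin{align*}
\sum_{n\leq x}\frac{|\mu|*\mu(n)}{n^{2}}=\sum_{m\leq\sqrt{x}}\frac{\mu(m)}{m^{4}},\qquad \sum_{n\leq x}\frac{|\mu|*\mu(n)}{n^{2}}\log n=2\sum_{m\leq\sqrt{x}}\frac{\mu(m)\log m}{m^{4}},
\end{align*}
the factor $2$ coming from $\log(m^{2})=2\log m$. Completing each sum to infinity supplies the main terms: $\sum_{m\geq1}\mu(m)m^{-4}=1/\zeta(4)$, while differentiating $1/\zeta(s)=\sum_{m\geq 1}\mu(m)m^{-s}$ gives $\sum_{m\geq1}\mu(m)(\log m)m^{-s}=\zeta'(s)/\zeta^{2}(s)$, hence the value $\zeta'(4)/\zeta^{2}(4)$ at $s=4$; together these produce exactly the constants $1/\zeta(4)$ and $2\zeta'(4)/\zeta^{2}(4)$. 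The tails are bounded trivially by $\sum_{m>\sqrt{x}}m^{-4}\ll x^{-3/2}$ and $\sum_{m>\sqrt{x}}(\log m)m^{-4}\ll x^{-3/2}\log x$, and since $\delta(x)$ decays more slowly than any negative power of $x$ one has $x^{-3/2}\ll\delta(x)/x$, so both error terms are absorbed into $O(\delta(x)/x)$.

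For Eq.~\eqref{delta411w} the same substitution gives $\sum_{n\leq x}|\mu|*\mu(n)/n=\sum_{m\leq\sqrt{x}}\mu(m)/m^{2}$, and here I would apply Eq.~\eqref{delta11} of Lemma~\ref{lem21} at the point $\sqrt{x}$ (legitimate once $x$ is large enough that $\sqrt{x}>5$) to get $1/\zeta(2)+O(\delta(\sqrt{x})/\sqrt{x})$. The leading term $1/\zeta(2)$ is already the claimed constant, and it remains only to compare error terms. This last comparison is the sole point needing genuine care: one must verify that $\delta(\sqrt{x})/\sqrt{x}\ll\delta(x)$. From the definition~\eqref{delta} one finds
\begin{align*}
\frac{\delta(\sqrt{x})}{\delta(x)}=\exp\!\Bigl(C\,(1-2^{-3/5})\,(\log x)^{3/5}(\log\log x)^{-1/5}\,(1+o(1))\Bigr),
\end{align*}
which grows only like $\exp(c(\log x)^{3/5})$ and is therefore dwarfed by the factor $\sqrt{x}=\exp(\tfrac12\log x)$; hence $\delta(\sqrt{x})/\sqrt{x}\ll\delta(x)$, and the stated error term $O(\delta(x))$ follows. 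Apart from this elementary but slightly delicate size comparison, every step is routine, so the only real obstacle is keeping track of how the sharper power-saving tails sit inside the slowly decaying functions $\delta(x)/x$ and $\delta(x)$ that appear in the statement.
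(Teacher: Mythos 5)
Your proposal is correct, but it proceeds quite differently from the paper. The paper obtains Eqs.~\eqref{delta111w} and \eqref{delta211w} by citing prior results (Eqs.~(1.17) and (1.18) of Inoue--Kiuchi \cite{IK1}), and proves Eq.~\eqref{delta411w} by writing the partial sum as the complete series $1/\zeta(2)$ minus a tail, which it bounds via partial summation together with the convolution estimate $\sum_{n\leq x}|\mu|*\mu(n)=\sum_{d\le x}|\mu(d)|\sum_{\ell\le x/d}\mu(\ell)=O(x\delta(x))$. You instead observe, via the Euler-product computation $\sum_n |\mu|*\mu(n)\,n^{-s}=\bigl(\zeta(s)/\zeta(2s)\bigr)\cdot\zeta(s)^{-1}=1/\zeta(2s)$, the coefficient identity that $|\mu|*\mu(n)$ equals $\mu(\sqrt{n})$ on perfect squares and vanishes otherwise (easily confirmed by multiplicativity: $|\mu|*\mu(p^k)$ is $1,0,-1,0,0,\dots$ for $k=0,1,2,3,4,\dots$), which collapses all three sums to sums of $\mu(m)$ over $m\le\sqrt{x}$. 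This is a genuinely different and arguably cleaner route: it is self-contained, needing only Lemma~\ref{lem21} and trivial tail bounds rather than the external estimates from \cite{IK1}, and it actually delivers stronger error terms than the lemma asserts --- $O(x^{-3/2})$, $O(x^{-3/2}\log x)$ and $O(x^{-1/2})$ respectively, since $\delta(\sqrt{x})/\sqrt{x}\le x^{-1/2}\ll\delta(x)$ (your ratio computation is fine, though this one-line bound suffices because $\delta\le 1$). What the paper's route buys in exchange is uniformity: the same template (complete series plus tail, or citation of the divisor-type expansions) handles $\mu*\mu$ in Lemma~\ref{lem211}, where no such collapsing identity is available, whereas your trick is special to the pair $(|\mu|,\mu)$.
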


\begin{proof}  
Eqs.~\eqref{delta111w} and \eqref{delta211w} follow from Eqs.~(1.17) and (1.18) in~\cite{IK1}, respectively. 
It is known that 
$$
 \sum_{n=1}^{\infty}\frac{|\mu|*\mu(n)}{n}=  \frac{1}{\zeta(2)},
$$
Now, we write our sums as follows
\begin{align*}
\sum_{n\leq x}\frac{|\mu|*\mu(n)}{n}&=\sum_{n=1}^{\infty}\frac{|\mu|*\mu(n)}{n}-\sum_{n> x}\frac{|\mu|*\mu(n)}{n}\\ &=
\frac{1}{\zeta(2)}-\sum_{n >x}\frac{|\mu|*\mu(n)}{n}.
\end{align*}
To complete the proof, it remains to estimate the last sum above. Notice that 
\begin{align*}
\sum_{n> x}\frac{|\mu|*\mu(n)}{n}=\int_ {x}^{\infty}\frac{\sum_{x<n\leq t}|\mu|*\mu (n)}{t^2}\, dt
\end{align*}
and that
\begin{align*}
\sum_{n\leq x}|\mu|*\mu(n) = \sum_{d\leq x}|\mu(d)|\sum_{\ell\leq x/d}\mu(\ell) 
 = O\l(x\sum_{d\leq x}\frac{|\mu(d)|}{d}\delta\l(\frac{x}{d}\r)\r) = O\l(x\delta(x)\r).
\end{align*}
Therefore, we have
\begin{align*}
\sum_{n> x}\frac{|\mu|*\mu(n)}{n}&= O\l(\int_{x}^{\infty} \frac{t \delta(t)}{t^2}\, dt \r)+O\l(\delta(x)\r)\\ &=
O\l(\delta(x)\r),
\end{align*}
and Eq.~\eqref{delta411w} is proved.
\end{proof}


\begin{lemma}          
\label{lem22}
For  any large  positive  number $x> 5$,  we have
\begin{equation}														\label{phi12}
\begin{split}
\sum_{n\leq x}\sum_{d|n}\frac{\phi(d)}{d}  &= \frac{1}{\zeta(2)}x\log x
+ \frac{1}{\zeta(2)}\l(2\gamma -1 - \frac{\zeta'(2)}{\zeta(2)}\r)x  \\
& + \sum_{d\leq x}\frac{\mu(d)}{d}\Delta\l(\frac{x}{d}\r)  + O\l(\delta(x)\log x\r),   
\end{split}
\end{equation}
and
\begin{align}                                                               \label{phi22}
\sum_{d\ell\leq x}\frac{\phi(d)}{d}\frac{1}{\ell^{2m}}
&= \frac{\zeta(1+2m)}{\zeta(2)}x + \sum_{d\leq x}\frac{\mu(d)}{d}\Delta_{-2m}\l(\frac{x}{d}\r) 
 + O_{m}\l(\delta(x)\r)                                    
\end{align}
for any positive integer  $m$.
Suppose that $x=[x]+\frac12$. Under the Riemann Hypothesis, we have
\begin{equation}															\label{phi32}
\begin{split}
 \sum_{n\leq x}\sum_{d|n}\frac{\phi(d)}{d}   &= \frac{1}{\zeta(2)}x\log x
+ \frac{1}{\zeta(2)}\l(2\gamma -1 - \frac{\zeta'(2)}{\zeta(2)}\r)x  \\
& + \sum_{d\leq x}\frac{\mu(d)}{d}\Delta\l(\frac{x}{d}\r)  + O\l(\frac{\eta(x)\log x}{x^{1/2}}\r),
\end{split}
\end{equation}
 and
\begin{align}
  \sum_{d\ell\leq x}\frac{\phi(d)}{d}\frac{1}{\ell^{2m}}
 &= \frac{\zeta(1+2m)}{\zeta(2)}x + \sum_{d\leq x}\frac{\mu(d)}{d}\Delta_{-2m}\l(\frac{x}{d}\r)
+ O_{m}\l(\frac{\eta(x)}{x^{1/2}}\r)                                           \label{phi42}
\end{align}
\end{lemma}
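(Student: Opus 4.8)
The plan is to reduce all four identities to the divisor-sum estimates already recorded in Lemma~\ref{lem21} by exploiting the factorization $\phi={\rm id}*\mu$, which gives the pointwise identity $\frac{\phi(d)}{d}=\sum_{e\mid d}\frac{\mu(e)}{e}$.

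First I would rewrite the double sum in \eqref{phi12} as $\sum_{n\leq x}\sum_{d\mid n}\frac{\phi(d)}{d}=\sum_{d\ell\leq x}\frac{\phi(d)}{d}$, insert the identity above, and set $d=ef$ to obtain
$$
\sum_{d\ell\leq x}\frac{\phi(d)}{d}=\sum_{e\leq x}\frac{\mu(e)}{e}\sum_{n\leq x/e}\tau(n)=\sum_{e\leq x}\frac{\mu(e)}{e}\,D\l(\frac{x}{e}\r),
$$
where $D(y)=\sum_{n\leq y}\tau(n)$. Substituting the Dirichlet formula \eqref{PP} and separating the three resulting sums, the leading terms are governed by $\sum_{e\leq x}\frac{\mu(e)}{e^2}$, $\sum_{e\leq x}\frac{\mu(e)}{e^2}\log e$, and the $\Delta$-term $\sum_{e\leq x}\frac{\mu(e)}{e}\Delta(x/e)$; applying \eqref{delta11} and \eqref{delta21} then produces exactly the main terms $\frac{1}{\zeta(2)}x\log x+\frac{1}{\zeta(2)}\l(2\gamma-1-\frac{\zeta'(2)}{\zeta(2)}\r)x$ together with the stated error $O(\delta(x)\log x)$, the logarithmic loss arising from multiplying the $O(\delta(x)/x)$ tails by $x\log x$.

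For \eqref{phi22} I would proceed analogously: after the substitution $d=ef$ the inner sum collapses to $\sum_{f\ell\leq x/e}\ell^{-2m}=\sum_{n\leq x/e}\sigma_{-2m}(n)$, which I evaluate by \eqref{QQ}. This splits the total into $\zeta(1+2m)\,x\sum_{e\leq x}\frac{\mu(e)}{e^2}$, the constant contribution $-\frac12\zeta(2m)\sum_{e\leq x}\frac{\mu(e)}{e}$, and $\sum_{e\leq x}\frac{\mu(e)}{e}\Delta_{-2m}(x/e)$. Here \eqref{delta11} supplies the main term $\frac{\zeta(1+2m)}{\zeta(2)}x$, while \eqref{delta41} absorbs the constant contribution into the error, giving $O_m(\delta(x))$.

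The conditional estimates \eqref{phi32} and \eqref{phi42} require no new ideas: I would repeat the two computations verbatim but feed in the Riemann-Hypothesis bounds \eqref{eta11}, \eqref{eta21} and \eqref{eta41} in place of their unconditional $\delta$-analogues, which upgrades the errors to $O(\eta(x)x^{-1/2}\log x)$ and $O_m(\eta(x)x^{-1/2})$ respectively. The only point demanding care throughout is the bookkeeping of which auxiliary estimate controls which piece: because the constant term $-\frac12\zeta(2m)$ in \eqref{QQ} carries no factor of $x/e$, it pairs with $\sum_{e\leq x}\mu(e)/e$ and is killed by the cancellation in \eqref{delta41} (resp. \eqref{eta41}), so that it produces no spurious main term. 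Beyond this, no genuine obstacle arises, since the entire argument is a convolution rearrangement followed by direct substitution of Lemma~\ref{lem21}.
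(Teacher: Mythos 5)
Your proposal is correct and follows essentially the same route as the paper: the rearrangement $\sum_{d\ell\leq x}\frac{\phi(d)}{d}=\sum_{e\leq x}\frac{\mu(e)}{e}\sum_{n\leq x/e}\tau(n)$ (resp.\ with $\sigma_{-2m}$) is exactly the paper's identity $\frac{\phi}{\rm id}*{\bf 1}=\frac{\mu}{\rm id}*\tau$ and $\frac{\phi}{\rm id}*{\rm id}_{-2m}=\frac{\mu}{\rm id}*\sigma_{-2m}$, after which both arguments substitute \eqref{PP}, \eqref{QQ} and the estimates of Lemma~\ref{lem21}. Your bookkeeping of where the $\log x$ loss enters and how \eqref{delta41} (resp.\ \eqref{eta41}) absorbs the $-\frac12\zeta(2m)$ term matches the paper's computation.
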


\begin{proof}
We recall the identity 
$
\d \frac{\phi}{\rm id}*{\bf 1}=\frac{\mu}{\rm id}*\tau.
$
Using Eqs.~\eqref{PP},  \eqref{delta11} and \eqref{delta21}, we obtain
\begin{align*}
&\sum_{n\leq x}\sum_{d|n}\frac{\phi(d)}{d}
= \sum_{d\leq x}\frac{\mu(d)}{d} \sum_{\ell\leq x/d}\tau(\ell) \nonumber  \\
&= x \l(\log x + 2\gamma -1 \r)  \sum_{d\leq x} \frac{\mu(d)}{d^2}  - x \sum_{d\leq x}\frac{\mu(d)}{d^2} \log d
+ \sum_{d\leq x}\frac{\mu(d)}{d}\Delta\l(\frac{x}{d}\r)                                                          \\ 
&=\frac{1}{\zeta(2)}x\log x
+ \frac{1}{\zeta(2)}\l(2\gamma -1 - \frac{\zeta'(2)}{\zeta(2)}\r)x
+ \sum_{d\leq x}\frac{\mu(d)}{d}\Delta\l(\frac{x}{d}\r)  + O\l(\delta(x)\log x\r), \nonumber
\end{align*}
which completes the proof of Eq.~\eqref{phi12}.
Further, we recall the identity  
$
\d \frac{\phi}{\rm id}*{\rm id}_{-2m} = \frac{\mu}{\rm id}*\sigma_{-2m},
$
and use Eqs.~\eqref{QQ}, \eqref{delta11} and \eqref{delta41} to get
\begin{align*}                                                           
\sum_{d\ell\leq x} \frac{\phi(d)}{d}\frac{1}{\ell^{2m}}
&= \sum_{d\leq x}\frac{\mu(d)}{d} \sum_{\ell\leq x/d}\sigma_{-2m}(\ell) \\
&=
\sum_{d\leq x}\frac{\mu(d)}{d}\left( \zeta(1+2m)\frac{x}{d}-\frac{1}{2}\zeta(2m)+\Delta_{-2m}\l(\frac{x}{d}\r)\right)
\\
&=\frac{\zeta(1+2m)}{\zeta(2)}x + \sum_{d\leq x}\frac{\mu(d)}{d}\Delta_{-2m}\l(\frac{x}{d}\r)
 + O_{m}\l(\delta(x)\r). \nonumber
\end{align*}
This completes the proof of Eq.~\eqref{phi22}.  
Similarly, we use Eqs.~\eqref{eta11}, \eqref{eta21} and \eqref{eta41}  to deduce Eqs.~\eqref{phi32}  and \eqref{phi42}.
\end{proof}


\begin{lemma}          \label{lem23}
For  any large  positive  number $x > 5$, we have
\begin{equation}							 \label{phi23}
\begin{split}
\sum_{n\leq x}\sum_{d|n}\frac{\mu*\phi(d)}{d}  
&= \frac{1}{\zeta^{2}(2)}x\log x + \frac{1}{\zeta^{2}(2)}\l(2\gamma -1 - 2\frac{\zeta'(2)}{\zeta(2)}\r)x  \\
& + \sum_{d\leq x}\frac{\mu*\mu(d)}{d}\Delta\l(\frac{x}{d}\r)  + O\l(\delta(x)\log x \r), 
\end{split}                                   
\end{equation}
and
\begin{align}                                                                  \label{phi33}
\sum_{d\ell\leq x}\frac{\mu*\phi(d)}{d}\frac{1}{\ell^{2m}}
&= \frac{\zeta(1+2m)}{\zeta^{2}(2)}x + \sum_{d\leq x}\frac{\mu*\mu(d)}{d}\Delta_{-2m}\l(\frac{x}{d}\r)  
+ O_{m}\l(\delta(x)\r)                                                          
\end{align}
for any positive integer $m$.
\end{lemma}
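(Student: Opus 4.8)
The plan is to follow the same strategy as in the proof of Lemma~\ref{lem22}, but with the M\"obius function $\mu$ systematically replaced by the convolution $\mu*\mu$ and with Lemma~\ref{lem21} replaced by Lemma~\ref{lem211}. The starting point is a convolution identity for $\mu*\phi$ analogous to the one used there for $\phi$. Since $\phi={\rm id}*\mu$, we have $\mu*\phi=(\mu*\mu)*{\rm id}$, and therefore
$$
\d \frac{\mu*\phi}{\rm id}=\frac{\mu*\mu}{\rm id}*{\bf 1},
\qquad\text{equivalently}\qquad
\frac{\mu*\phi}{\rm id}*{\bf 1}=\frac{\mu*\mu}{\rm id}*\tau .
$$
First I would use this to rewrite the double sum as a single Dirichlet-hyperbola sum over the variable carrying $\tau$.

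For Eq.~\eqref{phi23}, the identity gives
$$
\sum_{n\leq x}\sum_{d\mid n}\frac{\mu*\phi(d)}{d}
=\sum_{d\leq x}\frac{\mu*\mu(d)}{d}\sum_{\ell\leq x/d}\tau(\ell).
$$
I would then insert Eq.~\eqref{PP} for the inner sum, writing $\log(x/d)=\log x-\log d$, so that the main part splits into the three sums $\sum_{d\leq x}\mu*\mu(d)/d^2$, $\sum_{d\leq x}(\mu*\mu(d)/d^2)\log d$, and the tail $\sum_{d\leq x}(\mu*\mu(d)/d)\Delta(x/d)$. Applying Eqs.~\eqref{delta111} and \eqref{delta211} of Lemma~\ref{lem211} to the first two sums produces the coefficients $1/\zeta^2(2)$ and $2\zeta'(2)/\zeta^3(2)$, which combine into $\frac{1}{\zeta^2(2)}\l(2\gamma-1-2\zeta'(2)/\zeta(2)\r)$; the $O(\delta(x)/x)$ errors, once multiplied by the factor $x(\log x+2\gamma-1)$, contribute the admissible error $O(\delta(x)\log x)$, and the $\Delta$-sum is carried along unevaluated, giving \eqref{phi23}.

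For Eq.~\eqref{phi33}, the same identity convolved with ${\rm id}_{-2m}$ yields $\frac{\mu*\phi}{\rm id}*{\rm id}_{-2m}=\frac{\mu*\mu}{\rm id}*\sigma_{-2m}$, since $\sigma_{-2m}={\bf 1}*{\rm id}_{-2m}$. Hence
$$
\sum_{d\ell\leq x}\frac{\mu*\phi(d)}{d}\frac{1}{\ell^{2m}}
=\sum_{d\leq x}\frac{\mu*\mu(d)}{d}\sum_{\ell\leq x/d}\sigma_{-2m}(\ell).
$$
I would substitute Eq.~\eqref{QQ} for the inner sum. The leading piece $\zeta(1+2m)x\sum_{d\leq x}\mu*\mu(d)/d^2$ gives $\frac{\zeta(1+2m)}{\zeta^2(2)}x$ by \eqref{delta111}; the constant piece $-\tfrac12\zeta(2m)\sum_{d\leq x}\mu*\mu(d)/d$ is $O_m(\delta(x))$ by \eqref{delta411}; and the remaining sum $\sum_{d\leq x}(\mu*\mu(d)/d)\Delta_{-2m}(x/d)$ is retained, which is exactly \eqref{phi33}.

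There is no genuine obstacle here beyond the bookkeeping: the argument is structurally identical to Lemma~\ref{lem22}, and the only points requiring care are the verification of the convolution identity $\mu*\phi=(\mu*\mu)*{\rm id}$, so that $\mu*\mu$ rather than some other kernel appears in all three sums, and the combination of the $\log x$ and $\log d$ contributions, which is what forces the precise coefficient $-2\zeta'(2)/\zeta(2)$. As in Lemma~\ref{lem22}, all error terms reduce to $O(\delta(x)\log x)$ in \eqref{phi23} and $O_m(\delta(x))$ in \eqref{phi33}, because Lemma~\ref{lem211} supplies errors of size $O(\delta(x)/x)$ and $O(\delta(x))$ for the relevant partial sums of $\mu*\mu$.
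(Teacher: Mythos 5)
Your proposal is correct and follows essentially the same route as the paper: the identity $\frac{\mu*\phi}{\rm id}*{\bf 1}=\frac{\mu*\mu}{\rm id}*\tau$ (resp.\ $\frac{\mu*\phi}{\rm id}*{\rm id}_{-2m}=\frac{\mu*\mu}{\rm id}*\sigma_{-2m}$), followed by insertion of Eqs.~\eqref{PP} and \eqref{QQ} and an appeal to Eqs.~\eqref{delta111}, \eqref{delta211} and \eqref{delta411} of Lemma~\ref{lem211}. The bookkeeping of the main terms and error terms matches the paper's computation exactly.
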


\begin{proof}
We  use the identity  
$
\d \frac{\mu*\phi}{\rm id}*{\bf 1}=\frac{\mu*\mu}{\rm id}*\tau,
$
Eqs.~\eqref{PP}, \eqref{delta111}, and \eqref{delta211} to obtain
\begin{align*}
&\sum_{k\leq x}\sum_{d|k}\frac{\mu*\phi(d)}{d}
= \sum_{d\leq x}\frac{\mu*\mu(d)}{d} \sum_{\ell\leq x/d}\tau(\ell)   \\
&= x \l(\log x + 2\gamma -1 \r)  \sum_{d\leq x} \frac{\mu*\mu(d)}{d^2}  
   - x\sum_{d\leq x}\frac{\mu*\mu(d)}{d^2} \log d
+ \sum_{d\leq x}\frac{\mu*\mu(d)}{d}\Delta\l(\frac{x}{d}\r)           \\
&=\frac{1}{\zeta^{2}(2)}x\log x
+ \frac{1}{\zeta^{2}(2)}\l(2\gamma -1 - 2\frac{\zeta'(2)}{\zeta(2)}\r)x  
 + \sum_{d\leq x}\frac{\mu*\mu(d)}{d}\Delta\l(\frac{x}{d}\r)  + O\l(\delta(x)\log x\r),  
\end{align*}
which completes  the proof of Eq.~\eqref{phi23}.
By using the fact that
$
\d \frac{\mu*\phi}{\rm id}*{\rm id}_{-2m} = \frac{\mu*\mu}{\rm id}*\sigma_{-2m},
$
together with Eqs.~\eqref{QQ}, \eqref{delta111}, and \eqref{delta411} we get
\begin{align*}                                   
\sum_{d\ell\leq x} \frac{\mu*\phi(d)}{d}\frac{1}{\ell^{2m}}
&= \sum_{d\leq x}\frac{\mu*\mu(d)}{d} \sum_{\ell\leq x/d}\sigma_{-2m}(\ell)   \\
&=\frac{\zeta(1+2m)}{\zeta^{2}(2)}x + \sum_{d\leq x}\frac{\mu*\mu(d)}{d}\Delta_{-2m}\l(\frac{x}{d}\r)
 + O_{m}\l(\delta(x)\r). \nonumber
\end{align*}
Therefore, Eq.~\eqref{phi33} is proved.
\end{proof}


\begin{lemma}  															   \label{lem23q}
For  any large  positive  number $x>  5$,  we have
\begin{equation}										\label{psi23}
\begin{split}
\sum_{n\leq x}\sum_{d|n}\frac{\mu*\psi(d)}{d} 
&= \frac{1}{\zeta^{}(4)}x\log x + \frac{1}{\zeta^{}(4)}\l(2\gamma -1 - 2\frac{\zeta'(4)}{\zeta(4)}\r)x  \\
&  + \sum_{d\leq x}\frac{|\mu|*\mu(d)}{d}\Delta\l(\frac{x}{d}\r)  + O\l(\delta(x)\log x\r),      
\end{split}                               
\end{equation}
and
\begin{align}
\sum_{d\ell\leq x}\frac{\mu*\psi(d)}{d}\frac{1}{\ell^{2m}}
&= \frac{\zeta(1+2m)}{\zeta^{}(4)}x + \sum_{d\leq x}\frac{|\mu|*\mu(d)}{d}\Delta_{-2m}\l(\frac{x}{d}\r)  
 - \frac{\zeta(2m)}{2\zeta(2)} + O_{m}\l(\delta(x)\r)                           \label{psi33}
\end{align}
for any positive integer $m$.
\end{lemma}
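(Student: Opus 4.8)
The plan is to follow the template of Lemma~\ref{lem23}, replacing the totient $\phi = {\rm id}*\mu$ by the Dedekind function $\psi = {\rm id}*|\mu|$ and tracking the one place where the analogy breaks. The engine is the observation that the map $f \mapsto f/{\rm id}$ (pointwise division by ${\rm id}$, i.e.\ twisting by the completely multiplicative function $n\mapsto 1/n$) is a ring homomorphism for Dirichlet convolution, so that $(f*g)/{\rm id} = (f/{\rm id})*(g/{\rm id})$. Since $\psi = {\rm id}*|\mu|$ gives $\psi/{\rm id} = {\bf 1}*(|\mu|/{\rm id})$, I would first record the two convolution identities
$$
\frac{\mu*\psi}{\rm id}*{\bf 1} = \frac{|\mu|*\mu}{\rm id}*\tau, \qquad
\frac{\mu*\psi}{\rm id}*{\rm id}_{-2m} = \frac{|\mu|*\mu}{\rm id}*\sigma_{-2m},
$$
both of which reduce to $\frac{\mu*\psi}{\rm id} = \frac{|\mu|*\mu}{\rm id}*{\bf 1}$ followed by $\tau = {\bf 1}*{\bf 1}$ and $\sigma_{-2m} = {\rm id}_{-2m}*{\bf 1}$, respectively. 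These are the exact analogues of the identities used in the proof of Lemma~\ref{lem23}, now with $\mu*\mu$ replaced by $|\mu|*\mu$.

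For Eq.~\eqref{psi23}, I would rewrite the double sum as a convolution sum and split it according to the first identity, obtaining
$$
\sum_{n\leq x}\sum_{d\mid n}\frac{\mu*\psi(d)}{d}
= \sum_{d\leq x}\frac{|\mu|*\mu(d)}{d}\sum_{\ell\leq x/d}\tau(\ell).
$$
Inserting the Dirichlet divisor estimate \eqref{PP} for the inner sum and separating the main terms then leaves the three sum-types $\sum_{d\leq x}\frac{|\mu|*\mu(d)}{d^2}$ (appearing in both the $x\log x$ and the $(2\gamma-1)x$ contributions), $\sum_{d\leq x}\frac{|\mu|*\mu(d)}{d^2}\log d$, and $\sum_{d\leq x}\frac{|\mu|*\mu(d)}{d}\Delta(x/d)$. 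Feeding in the asymptotics \eqref{delta111w} and \eqref{delta211w} from Lemma~\ref{lem211s} turns the first two into the constants $1/\zeta(4)$ and $2\zeta'(4)/\zeta^2(4)$ (with errors $O(\delta(x)/x)$), so that the $x\log x$ and $x$ main terms assemble into exactly the claimed shape, while the $\Delta$-sum survives untouched; the accumulated error is $O(\delta(x)\log x)$, proving \eqref{psi23}.

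For Eq.~\eqref{psi33}, the same maneuver with the second identity gives
$$
\sum_{d\ell\leq x}\frac{\mu*\psi(d)}{d}\frac{1}{\ell^{2m}}
= \sum_{d\leq x}\frac{|\mu|*\mu(d)}{d}\sum_{\ell\leq x/d}\sigma_{-2m}(\ell),
$$
into which I would substitute the generalized divisor estimate \eqref{QQ}. This produces a main term $\zeta(1+2m)x\sum_{d\leq x}\frac{|\mu|*\mu(d)}{d^2}$, the $\Delta_{-2m}$-sum, and a term $-\tfrac12\zeta(2m)\sum_{d\leq x}\frac{|\mu|*\mu(d)}{d}$ coming from the constant $-\tfrac12\zeta(2m)$ in \eqref{QQ}. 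Here lies the one genuine difference from the $\phi$ case and the main point to get right: in Lemma~\ref{lem23} the analogous sum $\sum_{d\leq x}\frac{\mu*\mu(d)}{d}$ is $O(\delta(x))$ by \eqref{delta411}, so the constant term is absorbed into the error, whereas by \eqref{delta411w} the sum $\sum_{d\leq x}\frac{|\mu|*\mu(d)}{d}$ equals $1/\zeta(2)+O(\delta(x))$, which does not vanish. Consequently this term contributes the extra constant $-\zeta(2m)/(2\zeta(2))$ appearing in \eqref{psi33}, while \eqref{delta111w} converts the leading coefficient to $\zeta(1+2m)/\zeta(4)$ and the remaining pieces fold into $O_m(\delta(x))$. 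I expect no serious obstacle beyond carefully bookkeeping this surviving constant, which is precisely what distinguishes the Dedekind-function statement from its totient counterpart.
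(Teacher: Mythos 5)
Your proposal is correct and follows essentially the same route as the paper: the same two convolution identities, the same substitution of \eqref{PP} and \eqref{QQ}, and the same inputs \eqref{delta111w}, \eqref{delta211w}, \eqref{delta411w} from Lemma~\ref{lem211s}. You also correctly identify the one point where the $\psi$ case differs from the $\phi$ case, namely that $\sum_{d\leq x}\frac{|\mu|*\mu(d)}{d}=\frac{1}{\zeta(2)}+O(\delta(x))$ does not vanish and therefore produces the extra constant $-\frac{\zeta(2m)}{2\zeta(2)}$ in \eqref{psi33}.
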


\begin{proof}
From  the identity  
$
\d \frac{\mu*\psi}{\rm id}*{\bf 1}=\frac{\mu*|\mu|}{\rm id}*\tau,
$ 
we have 
$$
\sum_{k\leq x}\sum_{d|k}\frac{\mu*\psi(d)}{d}
= \sum_{d\leq x}\frac{\mu*|\mu|(d)}{d} \sum_{\ell\leq x/d}\tau(\ell).  
$$
Using Eqs.~\eqref{PP}, \eqref{delta111w}  and \eqref{delta211w}, we obtain
the formula Eq.~\eqref{psi23}.
Now, we use the identity 
 $
\d \frac{\mu*\psi}{\rm id}*{\rm id}_{-2m} = \frac{\mu*|\mu|}{\rm id}*\sigma_{-2m}
$  
to write our second sums as follows 
$$
\sum_{d\ell\leq x} \frac{\mu*\psi(d)}{d}\frac{1}{\ell^{2m}}
= \sum_{d\leq x}\frac{\mu*|\mu|(d)}{d} \sum_{\ell\leq x/d}\sigma_{-2m}(\ell).  
$$
Again, we use Eq.~\eqref{QQ} to get 
$$
\sum_{d\ell\leq x} \frac{\mu*\psi(d)}{d}\frac{1}{\ell^{2m}}
= 
\sum_{d\leq x}\frac{\mu*|\mu|(d)}{d}\left( \zeta(1+2m)\frac{x}{d}-\frac{1}{2}\zeta(2m)+\Delta_{-2m}\l(\frac{x}{d}\r)\right).
$$
Applying Eqs.~\eqref{delta111w} and \eqref{delta411w} to the above, we deduce
the desired result.
\end{proof}


Now we are ready to prove our main theorems. 
\subsection{Proofs of the Theorems}

\begin{proof}[Proof of Theorem \ref{th11}]
First, we take $f={\rm id}$ into Eq.~\eqref{K-formula} to get
\begin{align}
M_r(x; {\rm id})
& =
\frac{1}{2}\sum_{n\leq x}1 + 
\frac{1}{r+1} \sum_{d\ell\leq x}\frac{\mu*{\rm id} (d)}{d}+
\frac{1}{r+1}\sum_{m=1}^{[r/2]}{r+1 \choose 2m} B_{2m} \sum_{d\ell\leq x}\frac{\mu*{\rm id}(d)}{d}\frac{1}{\ell^{2m}} \nonumber
\\ &=
\frac{1}{2}\sum_{n\leq x}1  + \frac{1}{r+1} \sum_{n\leq x}\sum_{d|n}\frac{\phi(d)}{d}   
+ \frac{1}{r+1}\sum_{m=1}^{[r/2]}{r+1 \choose 2m} B_{2m} \sum_{d\ell\leq x}\frac{\phi(d)}{d}\frac{1}{\ell^{2m}}.   \label{K}
\end{align}
Applying Eqs.~\eqref{phi12} and \eqref{phi22} above yields
\begin{align*}                                                              
K_r(x)
&= \frac{1}{r+1} \sum_{d\leq x} \frac{\mu(d)}{d}\Delta\l(\frac{x}{d}\r) 
 \\&+ \frac{1}{r+1}\sum_{d\leq x} \frac{\mu(d)}{d} \sum_{m=1}^{[r/2]}{r+1 \choose 2m} B_{2m} \Delta_{-2m}\l(\frac{x}{d}\r)   
 + O_{r}\l(\delta(x)\log x\r),                  \end{align*}
which gives the desired result. 
We take $f=\phi$ into Eq.~\eqref{K-formula} to get
\begin{align*}
M_r(x; \phi)
& = \frac{1}{2}\sum_{n\leq x}\frac{\phi(n)}{n}  
+ \frac{1}{r+1} \sum_{n\leq x}\sum_{d|n}\frac{\mu*\phi(d)}{d}   \nonumber \\
& + \frac{1}{r+1} \sum_{m=1}^{[r/2]}{r+1 \choose 2m} B_{2m} \sum_{d\ell\leq x}\frac{\mu*\phi(d)}{d}\frac{1}{\ell^{2m}}.
\end{align*}
Using Lemma  \ref{lem20}, as well as Eqs. \eqref{phi23} and \eqref{phi33}, we get 
\begin{align}                                                              
L_r(x) &=  \frac{1}{r+1} \sum_{n\leq x} \frac{\mu*\mu(n)}{n}\Delta\l(\frac{x}{n}\r) \nonumber            \\
& \quad  + \frac{1}{r+1} 
\sum_{n\leq x} \frac{\mu*\mu(n)}{n}\sum_{m=1}^{[r/2]}{r+1 \choose 2m}B_{2m}\Delta_{-2m}\l(\frac{x}{n}\r)   
+  O_{r}\l((\log x)^{2/3}(\log\log x)^{1/3}\r),                                          \nonumber
\end{align}
as desired. Taking $f=\psi$ into Eq.~\eqref{K-formula} we get 
\begin{align}
M_r(x; \psi)
& = \frac{1}{2}\sum_{n\leq x}\frac{\psi(n)}{n}  
+ \frac{1}{r+1} \sum_{d\ell\leq x}\frac{\mu*\psi(d)}{d}   \nonumber \\
& + \frac{1}{r+1} \sum_{m=1}^{[r/2]}{r+1 \choose 2m} B_{2m} \sum_{d\ell\leq x}\frac{\mu*\psi(d)}{d}\frac{1}{\ell^{2m}}. \nonumber     
\end{align}
Applying Lemma \ref{lem201}, as well as Eqs. \eqref{psi23} and \eqref{psi33} in the above formula yields 
\begin{align}                                                               
&U_r(x) =  \frac{1}{r+1} \sum_{n\leq x} \frac{\mu*|\mu|(n)}{n}\Delta\l(\frac{x}{n}\r)  \nonumber           \\
& \quad  + \frac{1}{r+1} 
\sum_{n\leq x} \frac{\mu*|\mu|(n)}{n}\sum_{m=1}^{[r/2]}{r+1 \choose 2m}B_{2m}\Delta_{-2m}\l(\frac{x}{n}\r)   
- \frac{1}{4\zeta(2)}\log x +  O_{r}\l((\log x)^{2/3}\r).                  \nonumber
\end{align}
This completes the proof of Theorem~\ref{th11}.
\end{proof}


\begin{proof}[Proof of Theorem \ref{th12}]
By assuming the Riemann Hypothesis, and applying Eqs.~\eqref{phi32} and \eqref{phi42} in Eq.~\eqref{K}, we immediately deduce that
\begin{align*}                                                              
K_r(x) &= \frac{1}{r+1} \sum_{d\leq x} \frac{\mu(d)}{d}\Delta\l(\frac{x}{d}\r)               \\
&  + \frac{1}{r+1}  \sum_{d\leq x} \frac{\mu(d)}{d}\sum_{m=1}^{[r/2]}{r+1 \choose 2m} B_{2m} \Delta_{-2m}\l(\frac{x}{d}\r) 
   + O_{r}\l(\frac{ \eta(x) \log x}{x^{1/2}} \r).                                     \nonumber 
\end{align*}
which completes the proof of Theorem~\ref{th12}.
\end{proof}

\section{Proofs of Theorems \ref{th13} and \ref{th33}}

To prove Theorems \ref{th13} we just need the following lemma.

\begin{lemma}                    
\label{lem33}
Under the hypotheses of Theorem~\ref{th13}, we have
\begin{align*}                                                                 
\sum_{n\leq x}\frac{\mu(n)}{n^2}
 &=\frac{1}{\zeta(2)}
 + \sum_{|\gamma|\leq T_{*}}\frac{x^{\rho-2}}{(\rho-2)\zeta'(\rho)} + \frac{\pi^2}{\zeta(3)}x^{-4} + O\l(x^{-5}\r),  
\end{align*}
\begin{align*}                                                                  
\sum_{n\leq x}\frac{\mu(n)}{n^2}\log \frac{x}{n}
 &=\frac{1}{\zeta(2)}\l(\log x - \frac{\zeta'(2)}{\zeta^{}(2)}\r)
 + \sum_{|\gamma|\leq T_{*}}\frac{x^{\rho-2}}{(\rho-2)^{2}\zeta'(\rho)}  
 - \frac{\pi^2}{4\zeta(3)}x^{-4}  + O\l(x^{-5}\r),   
\end{align*}
and
\begin{align*}                                                                  
\sum_{n\leq x}\frac{\mu(n)}{n}
 &=  \sum_{|\gamma|\leq T_{*}}\frac{x^{\rho-1}}{(\rho-1)\zeta'(\rho)}
    +  \frac{4 \pi^2}{3\zeta(3)}x^{-3} +  O\l(x^{-5}\r). 
\end{align*}
\end{lemma}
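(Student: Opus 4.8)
The three estimates all follow from a single mechanism: an effective Perron formula followed by a shift of the contour to the left, with the poles read off from the Dirichlet series $\sum_{n=1}^{\infty}\mu(n)n^{-s}=1/\zeta(s)$. The plan is to start from the (absolutely convergent) truncated representations
\[
\sum_{n\le x}\frac{\mu(n)}{n^2}=\frac{1}{2\pi i}\int_{c-iT_*}^{c+iT_*}\frac{1}{\zeta(s+2)}\frac{x^s}{s}\,ds+R_1,
\qquad
\sum_{n\le x}\frac{\mu(n)}{n}=\frac{1}{2\pi i}\int_{c-iT_*}^{c+iT_*}\frac{1}{\zeta(s+1)}\frac{x^s}{s}\,ds+R_3,
\]
\[
\sum_{n\le x}\frac{\mu(n)}{n^2}\log\frac{x}{n}=\frac{1}{2\pi i}\int_{c-iT_*}^{c+iT_*}\frac{1}{\zeta(s+2)}\frac{x^s}{s^2}\,ds+R_2,
\]
with $c>0$ a small fixed constant; here the kernel $x^s/s^2$ encodes the weight $\log(x/n)$. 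The truncation errors $R_1,R_2,R_3$ are controlled by the standard effective Perron estimate, and this is exactly where the assumption $x=[x]+\tfrac12$ enters: it forces $|\log(x/n)|\gg 1/x$ for every integer $n$, so that together with $T_*\ge x^6$ each $R_i$ is $O(x^{-5})$.

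Next I would move the line of integration to $\Re s=-5$ for the first two integrals and to $\Re s=-\tfrac{11}{2}$ for the third, and apply the residue theorem to the resulting rectangle. By the simplicity of the zeros, $1/\zeta(s+2)$ has a simple pole at each $s=\rho-2$ with residue $1/\zeta'(\rho)$, and since $\Re(\rho-2)\in(-2,-1)$ every nontrivial zero with $|\gamma|\le T_*$ lies inside the rectangle; these poles produce precisely the sums $\sum_{|\gamma|\le T_*}$. The pole at $s=0$ gives the main terms: for the first integral the residue is $1/\zeta(2)$; for the second it is a double pole yielding $\tfrac{1}{\zeta(2)}\bigl(\log x-\zeta'(2)/\zeta(2)\bigr)$; for the third, $1/\zeta(s+1)$ has a simple zero at $s=0$ that cancels the $1/s$, so there is no main term, as the stated formula reflects. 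Finally the trivial zero at $s+2=-2$ (resp.\ $s+1=-2$) contributes the $\pi^2/\zeta(3)$ terms: using $\zeta'(-2)=-\zeta(3)/(4\pi^2)$ one computes the residues $\tfrac{\pi^2}{\zeta(3)}x^{-4}$, $-\tfrac{\pi^2}{4\zeta(3)}x^{-4}$ and $\tfrac{4\pi^2}{3\zeta(3)}x^{-3}$, respectively; for the third sum the next trivial pole at $s=-5$ contributes only $O(x^{-5})$ and is absorbed into the error.

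It then remains to bound the shifted contour. On the left vertical line $x^s$ has modulus $x^{-5}$ (resp.\ $x^{-11/2}$), while the functional equation shows $|\zeta|^{-1}$ decays like a negative power of the height there (e.g.\ $|\zeta(-3+it)|\asymp|t|^{7/2}$), so the vertical integral is $O(x^{-5})$. On the horizontal segments at height $\pm T_*$ I would invoke the hypothesis $1/\zeta(\sigma+iT_*)\ll T_*^{\varepsilon}$ on $\tfrac12\le\sigma\le 2$, extending it to the whole range $-5\le\Re s\le c$ by the trivial lower bound for $\Re(s)>0$ and by the functional equation for $\Re(s)<-\tfrac32$ (where $|\chi(s)|^{-1}\le 1$ pulls the reflected argument into the interval covered by the hypothesis). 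This gives a horizontal contribution $\ll x^{c}T_*^{\varepsilon-1}\ll x^{\,c-6+6\varepsilon}=O(x^{-5})$ once $c$ and $\varepsilon$ are small and $T_*\ge x^6$.

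The residue bookkeeping is routine; the genuine work, and the reason for the precise hypotheses, is the simultaneous suppression of all three error sources below the $x^{-5}$ threshold: the Perron truncation error (needing $x=[x]+\tfrac12$), the horizontal integrals at height $T_*$ (needing the bound on $1/\zeta(\sigma+iT_*)$ together with the functional equation), and the left vertical integral. The step I expect to be most delicate is obtaining the horizontal bound uniformly across $-5\le\Re s\le c$ from a hypothesis stated only on $[\tfrac12,2]$, which is where the functional-equation reflection must be handled carefully.
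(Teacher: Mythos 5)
Your proposal is essentially correct, but there is nothing in the paper to compare it against: the paper's entire ``proof'' of Lemma~\ref{lem33} is a citation to the unpublished Inoue--Kiuchi preprint \cite{IK}, so your argument supplies what the paper only outsources. What you describe is the classical route to such explicit formulas (it is Titchmarsh's method for $M(x)$ under the simplicity hypothesis, Theorem~14.27 of \emph{The Theory of the Riemann Zeta-Function}), and the bookkeeping checks out. The residues at $s=0$ of the three kernels $x^s/(s\zeta(s+2))$, $x^s/(s^2\zeta(s+2))$, $x^s/(s\zeta(s+1))$ are $1/\zeta(2)$, $\frac{1}{\zeta(2)}\left(\log x-\frac{\zeta'(2)}{\zeta(2)}\right)$ and $0$ (the pole of $\zeta(s+1)$ cancelling the $1/s$), matching the lemma's main terms; the simple poles at $s=\rho-2$ (resp.\ $s=\rho-1$), all inside the rectangle since $0<\beta<1$, give exactly the stated zero-sums because the zeros are assumed simple; and with $\zeta'(-2)=-\zeta(3)/(4\pi^2)$ the first trivial zero contributes $\frac{\pi^2}{\zeta(3)}x^{-4}$, $-\frac{\pi^2}{4\zeta(3)}x^{-4}$ and $\frac{4\pi^2}{3\zeta(3)}x^{-3}$ respectively, again as stated. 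The hypotheses are deployed exactly where they must be: $x=[x]+\frac{1}{2}$ together with $T_*\geq x^6$ makes each Perron truncation error $O(x^{-5})$; the bound $1/\zeta(\sigma+iT_*)\ll T_*^{\varepsilon}$, extended rightward by absolute convergence and leftward via the functional equation (where $|\chi(w)|^{-1}\asymp T_*^{\Re w-1/2}\leq 1$ for $\Re w\leq\frac{1}{2}$, and conjugation handles the segment at height $-T_*$), controls the horizontal segments; and $|\zeta(-3+it)|\asymp |t|^{7/2}$ makes the left vertical integral absolutely convergent and of size $O(x^{-5})$. Your shift to $\Re s=-\frac{11}{2}$ for the third sum, to avoid the trivial zero of $\zeta(s+1)$ sitting on the line $\Re s=-5$, is also the correct precaution, with that pole's contribution $x^{-5}/(5|\zeta'(-4)|)$ absorbed into the error term. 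In short: correct, self-contained, and presumably the same mechanism as the cited preprint, which is the only natural way to prove formulas of this shape.
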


\begin{proof}
The proof of the lemma can be found in~\cite{IK}.
\end{proof}


\begin{proof}[Proof of Theorem  \ref{th13}]
We recall that 
\begin{align*}
M_r(x; {\rm id})
    & = \frac{1}{2}\sum_{n\leq x}1  + \frac{1}{r+1} \sum_{d\ell\leq x}\frac{\mu* \rm id(d)}{d}   
+ \frac{1}{r+1}\sum_{m=1}^{[r/2]}{r+1 \choose 2m} B_{2m} \sum_{d\ell\leq x}\frac{\mu*\rm id(d)}{d}\frac{1}{\ell^{2m}}.          
\end{align*}
Using the fact that 
$$ \frac{\mu* \rm id}{\rm id}* {\bf{1}}= \frac{\mu}{\rm id}* \tau, \qquad \qquad  \frac{\mu* \rm id}{\rm id}*{\rm id}_{-2m}=  \frac{\mu}{\rm id}*\sigma_{-2m},$$ 
and Eqs.~(\ref{PP}) and (\ref{QQ}), we get 
\begin{align*}
M_r(x, {\rm id})  & = \frac{[x]}{2}
 +  \frac{x}{r+1} \sum_{n\leq x}\frac{\mu(n)}{n^2}\log \frac{x}{n}       
 + \frac{x}{r+1}\l(2\gamma-1 + C_{\rm{odd}}(r) \r)\sum_{n\leq x}\frac{\mu(n)}{n^2}    \nonumber  \\
&  - \frac{C_{\rm{even}}(r)}{2(r+1)} \sum_{n\leq x}\frac{\mu(n)}{n}
  + \frac{1}{r+1}\sum_{n\leq x}\frac{\mu(n)}{n}\Delta\l(\frac{x}{n}\r)  \nonumber \\
&  + \frac{1}{r+1}\sum_{m=1}^{[r/2]} {r+1 \choose 2m} B_{2m}\sum_{n\leq x}\frac{\mu(n)}{n} \Delta_{-2m}\l(\frac{x}{n}\r)             \nonumber
\end{align*}
Under the hypotheses of the theorem, we use Lemma~\ref{lem33} to obtain
\begin{align*}                                                               
& M_r(x, {\rm id})  = \frac{[x]}{2}+  \frac{1}{(r+1)\zeta(2)}x\log x 
+  \frac{x}{(r+1)\zeta(2)}\l(2\gamma - 1- \frac{\zeta'(2)}{\zeta(2)} + C_{\rm{odd}}(r) \r)            \\
& + \frac{1}{r+1}\sum_{n\leq x}\frac{\mu(n)}{n}\Delta\l(\frac{x}{n}\r) 
   + \frac{1}{r+1}\sum_{m=1}^{[r/2]} {r+1 \choose 2m} B_{2m}\zeta(2m)\sum_{n\leq x}\frac{\mu(n)}{n}\Delta_{-2m}\l(\frac{x}{n}\r)\\
&  + \frac{1}{r+1}\l(2\gamma-1 + C_{\rm{odd}}(r) \r)
     \sum_{|\gamma|\leq T_{*}}\frac{x^{\rho-1}}{(\rho-2)\zeta'(\rho)}      \\
&  + \frac{1}{r+1}\sum_{|\gamma|\leq T_{*}}\frac{x^{\rho-1}}{(\rho-2)^2 \zeta'(\rho)}
   - \frac{C_{\rm{even}}(r)}{2(r+1)} \sum_{|\gamma|\leq T_{*}}\frac{x^{\rho-1}}{(\rho-1)\zeta'(\rho)}  + O_{r}\l(x^{-3}\r),     
\end{align*}
which completes the proof.
\end{proof}


\begin{proof}[Proof of Theorem \ref{th33}]
To prove our theorem it suffices to show that
$$ 
\sum_{|\gamma| \leq  T_{*}}
\frac{x^{-\frac{1}{2}+i\gamma}}{(-j+i\gamma) \zeta'(\frac{1}{2}+i\gamma)} = O\l(x^{-1/2}(\log x)^{5/4}\r)
$$ 
with $j=1/2$ and $3/2$.
We take $\lambda=-1/2$ into (\ref{GH}), then
$
\d J_{-1/2}(T_{*})  \ll  T_{*}(\log T_{*})^{1/4}.
$
Using  the above   and partial summation  we have
\begin{align*}
&\sum_{|\gamma| \leq  T_{*}}
\frac{1}{\gamma^{} |\zeta'(\frac12+i\gamma)|}
\ll \left[\frac{J_{-1/2}(t)}{t^{}}\right]_{14}^{T_{*}}
+ \int_{14}^{T_{*}}\frac{J_{-1/2}(t)}{t^{2}}dt
\ll (\log T_{*})^{5/4},
\end{align*}
and the proof is complete. 
\end{proof}


\section*{Acknowledgement}
The first author is supported by the Austrian Science
Fund (FWF), Project F5507-N26, which is part
of the Special Research Program  ``Quasi Monte
Carlo Methods: Theory and Applications''. The third author is supported by the Austrian Science Fund (FWF): Projects F5507-N26 and F5505-N26, which are part
of the Special Research Program  ``Quasi Monte
Carlo Methods: Theory and Applications''. 
 

\medskip\noindent {\footnotesize Lisa Kaltenb\"ock: 
Institute of Financial Mathematics and Applied Number Theory, Johannes Kepler University, Altenbergerstrasse 69, 4040 Linz, Austria.\\
e-mail: {\tt lisa.kaltenb\"ock@jku.at}}

\medskip\noindent {\footnotesize Isao Kiuchi: Department of Mathematical Sciences, Faculty of Science,
Yamaguchi University, Yoshida 1677-1, Yamaguchi 753-8512, Japan. \\
e-mail: {\tt kiuchi@yamaguchi-u.ac.jp}}

\medskip\noindent {\footnotesize Sumaia Saad Eddin: 
Institute of Financial Mathematics and Applied Number Theory, Johannes Kepler University, Altenbergerstrasse 69, 4040 Linz, Austria.\\
e-mail: {\tt sumaia.saad\_eddin@jku.at}}

\medskip\noindent {\footnotesize Masaaki Ueda: Department of Mathematical Sciences, Faculty of Science,
Yamaguchi University, Yoshida 1677-1, Yamaguchi 753-8512, Japan. \\
e-mail: {\tt i001wb@yamaguchi-u.ac.jp}}

\end{document}